\newtheorem{thm}{\bf Theorem}[section]
\newtheorem{theorem}[thm]{\bf Theorem}
\newtheorem{lem}[thm]{\bf Lemma}
\newtheorem{cor}[thm]{\bf Corollary}
\newenvironment{proof}{\noindent{\em Proof:}}{\quad \hfill$\Box$\vspace{2ex}}
\newenvironment{remark}{\noindent{\bf Remark}}{\vspace{2ex}}
\renewcommand{\theequation}{\arabic{section}.\arabic{equation}}
\begin{document}
 \title{The Fourier Type Expansions on Tubes\let\thefootnote\relax\footnotetext{This work was supported by Macau Government FDCT 079/2016/A2 and Multi-Year Research Grant of the University of Macau No. MYRG2016-00053-FST.}}
\author{
Weixiong Mai
and Tao Qian\thanks{Corresponding author}
}
\date{}
 \maketitle
 \begin{abstract}
 In view of recent developments of the study of reproducing kernel Hilbert spaces, in particular with the context the Hardy spaces on tubes, aspects of rational approximation for functions of finite energy in several complex and several real variables are developed.  
 \end{abstract}

{\bf Keywords} {Hardy spaces on tubes; $L^2(\mathbb R^n)$ functions; Cauchy-Szeg\"o kernel; Adaptive Fourier Decomposition}

{\bf Mathematics Subject Classification (2000)} {32A30; 32A35; 41A}
 \tableofcontents

  \section{Introduction}
 The study of approximation of one complex and real variable has a long history (see for instance \cite{Wa,Wa1,Zygmund}). In particular, J. L. Walsh discussed problems of approximating holomorphic functions by rational functions in one complex variable. In the one complex or real variable contexts the studies are mainly based on orthogonal function systems. In the unit disc and in the upper-half complex plane cases rational orthogonal systems, or Takenaka-Malmquist (TM) systems, are unavoidable. The trigonometric system, for instance, is a particular case of a TM system: when all the system parameters take the zero value then the TM system reduces to the trigonometric system. Studies in the one variable cases are closely related to the topics interpolation and sampling, as well as uniqueness sets, etc. There are comparably much less results in multivariate approximations. Studies on several variables, as in the one variable cases, involve the complex analytic (or Cauchy type) structures of the underlying spaces. There are basically two different complex structures of which one is several complex variables and the other is Clifford algebra, the latter being mainly for several real variables. The two settings have different natures. As an example, multiple trigonometric series deals with approximation by multi-polynomials to functions defined on the $n$-torus that correspond to holomorphic functions of several complex variables inside the polydisc ${\mathbb D}^n.$ In the several complex variables setting approximations on boundaries of open sets of ${\mathbb C}^n$ have been studied. In the Clifford setting approximations to functions defined on manifolds as boundaries of open sets in Euclidean spaces have been studied. A model of the Clifford context is the conjugate harmonic systems in the sense of Stein-Weiss (\cite{SW}). The multiple variables cases induce hard and open problems such as Bochner-Riesz summation and non-restrictive rectangle summability, etc. The main obstacle with a practical and comprehensive theory of multivariate approximation would be the lack of TM-systems as rational orthogonal systems in one-complex variable. The present paper aims to develop a rational approximation theory and methodology in the several complex variables setting with the underlying spaces as the Hardy spaces on tubes. The latter also gives rise to approximation on ${\mathbb R}^n$. The study will be in the spirit of the AFD (adaptive Fourier decomposition) as given in \cite{QW}. With a simple and elegant manner, 1D-AFD, or Core-AFD, offers an effective rational approximation method in one complex variable. Due to its particular algebraic formulation the Gram-Schmidt (G-S) orthogonalization is combined in the process. By using the maximal selection principle rapid convergence is achieved. In higher dimensions there does not exist an algebraic formulation like Core-AFD of one-dimension, and thus the G-S process cannot be incorporated algebraically and has be down separately. 
 The effect of the algebraic approach of Core-AFD can, in fact, be achieved through a pre-orthogonal procedure. The latter, being called Pre-orthogonal AFD, or POAFD in brief (see \cite{Q-2D} and further in \cite{Qbook}), is considered as new formulation applicable to a large class of Hilbert spaces. It is, particularly, applicable, with equal force as AFD of the Hardy space of the unit disc, to a large class of reproducing kernel Hilbert spaces  (see \cite{MQ1}), including our several complex variables context.
  Below we will give an expository overview of Core-AFD, and show the passage leading AFD to POAFD. Before going into details we recall that by adopting the interpolation formulation of Blaschke products in higher dimensions AFD theory has been successfully generalized to approximate matrix-valued functions of multivariate complex variables (\cite{ACQS1,ACQS2}). The task of the present paper is to establish POAFD method for Hardy spaces on tubes through verifying the corresponding boundary vanishing conditions. This establishment raises the concept and introduces methodology of rational approximation in Hardy spaces on tubes generalizing the long existing classical theory in the disc and upper half complex plane. The Blaschke product approach, however, seems to be more delicate and restrictive. The POAFD method may be adopted to a wider class of spaces (\cite{QD}).

 Recall that ${\mathbb D}$ denotes the unit disc in the complex plane. Let $H^2({\mathbb D})$ denote the classical Hardy $H^2$-space in the unit disc. Among several equivalent definitions of $H^2({\mathbb D})$ we adopt the one in terms of the coefficients of the Taylor expansion of a complex holomorphic function in the disc, that is,
 \[ H^2({\mathbb D})=\{f:{\mathbb D}\to {\mathbb C}\ |\ f \ {\rm is\ holomorphic\ in\ }\ {\mathbb D}, f(z)=\sum_{k=0}^\infty c_kz^k, \sum_{k=0}^\infty |c_k|^2<\infty\}.\] We will be using the following dense subset of $H^2({\mathbb D})$:
 \[ \{e_a(z);a\in {\mathbb D}\}, \quad {\rm where}\quad e_a(z)=\frac{\sqrt{1-|a|^2}}{1-\overline{a}z}.\]
 We note that the reproducing kernel $k_a$ of $H^2({\mathbb D})$
 is
 \[ k_a(z)=\frac{1}{1-\overline{a}z},\]
 and $e_a$ is the normalized reproducing kernel.
 For any function $f_1=f\in H^2({\mathbb D})$ and $a_1\in {\mathbb D}$ one has the algebraic identity
 \[ f(z)=\langle f_1, e_{a_1}\rangle e_{a_1}(z)+\frac{f_1(z)-\langle f_1, e_{a_1}\rangle e_{a_1}(z)}{\frac{z-a_1}{1-\overline{a}_1z}}
 \frac{z-a_1}{1-\overline{a}_1z}.\]
 Setting
 \[f_2(z)=\frac{f_1(z)-\langle f_1, e_{a_1}\rangle e_{a_1}(z)}{\frac{z-a_1}{1-\overline{a}_1z}},\] the above identity can be re-written
 \begin{eqnarray}\label{maximum sifting}  f(z)=\langle f_1, e_{a_1}\rangle e_{a_1}(z)+f_2(z)\frac{z-a_1}{1-\overline{a}_1z}\end{eqnarray}
 with the energy relation
 \[ \|f\|^2=\| \langle f_1, e_{a_1}\rangle e_{a_1}\|^2+\|f_2\|^2 =(1-|a_1|^2)|f_1(a_1)|^2 + \|f_2\|^2,\]
 where besides the usual orthogonal projection property we also used the complex unit modular property of the M\"obius transform. The process of getting $f_2$ from $f_1$ with the parameter $a_1$ is called a sifting process. 
 The nice thing is that in the open set ${\mathbb D}$ one can find
 \[ a_1=\arg \max \{ (1-|a|^2)|f_1(a)|^2\ : \ a\in {\mathbb D}\}\]
 (\cite{QW}). The availability of selection of $a_1$ in (\ref{maximum sifting}) with the above maximal property is called the \emph{maximal selection principle}. By fixing such $a_1$ the energy $\|f_2\|^2$ is minimized. Repeating the same procedure to $f_2,$ and so on. Up to  the $n$-th step one gets
 \begin{eqnarray}\label{1} f(z)=\sum_{k=1}^m \langle f_k, e_{a_k}\rangle B_k(z) + f_{m+1}\prod_{k=1}^m\frac{z-a_k}{1-\overline{a}_kz}\end{eqnarray}
 where for $k=1,\cdots ,m,$
 \begin{eqnarray}\label{2} a_k=\arg \max \{ (1-|a|^2)|f_k(a)|^2\ : \ a\in {\mathbb D}\},\end{eqnarray}
 \begin{eqnarray}\label{3}f_k(z)=\frac{f_{k-1}(z)-\langle f_{k-1}, e_{a_{k-1}}\rangle e_{a_{k-1}}(z)}{\frac{z-a_{k-1}}{1-\overline{a}_{k-1}z}},\end{eqnarray}
 and\begin{eqnarray}\label{4}B_k(z)=B_{\{a_1,\cdots,a_k\}}(z)=
 \frac{\sqrt{1-|a_k|^2}}{1-\overline{a}_kz}\prod_{l=1}^{k-1}
 \frac{z-a_l}{1-\overline{a}_lz},\end{eqnarray}
 the latter being the so called
 rational orthonormal, or Takenaka-Malmquist, system which is generated by the algebraic sifting process and automatically orthonormal. Due to the maximal selections of the parameters it has fast convergence.
 The following theorem gives what we call adaptive Fourier decomposition (AFD or Core-AFD).
 \begin{theorem}[\cite{QW}]\label{AFD}
 	For any  given function $f$ in the Hardy $H^2$ space, by making a maximal selection at each step we have
 	\[  f(z)=\sum_{k=1}^\infty \langle f_k, e_{a_k}\rangle B_k(z).\]\end{theorem}
 We remark that due to the maximal selection principle in the sifting process AFD converges at fast pace. It is an equivalent type of fast convergence that we will develop in this paper for multivariate functions. A technically alternative adaptive expansion, also of the Fourier type, called unwinding Blaschke expansion, has been developed in a series of recent papers by Coifman et al. (\cite{CS,CSW,CP}), and, independently, by Qian et al. (\cite{Q1}). The unwinding method does not seem to have a close counterpart in higher dimensions, for it is strongly dependent on factorization. The second remark on 1D-AFD is that each term of a TM system, when prefixing $a_1=0$, has a positive boundary phase derivative function, defined as the instantaneous frequency function of the term. This aspect, in fact, was the motivation of the mono-component function theory, as well as AFD (\cite{QW}). Unwinding Blaschke expansions also have positive frequencies. In this paper we do not pursue the unwinding method but concentrate in POAFD as a generalization of AFD characterized by the \emph{pre-orthogonal maximal selection principle } (\ref{MSP-AFD}).

 AFD was originally established in the Hardy spaces of the unit disc and the upper half plane. For multivariate cases the step (\ref{3}), involving the generalized backward shift operation to get the \emph{induced remainder} $f_{k+1},$ is unavailable, and thus the maximal selection in step (\ref{2}) cannot be performed. Suggested by the relations
 \begin{eqnarray}\label{relations}
 \langle f_k,e_{a_k}\rangle=\langle g_k,B_k\rangle=\langle f,B_k\rangle,\end{eqnarray} where \[ g_{k}(z)=f_{k}(z)\prod_{l=1}^{k-1}\frac{z-a_l}{1-\overline{a}_lz}\] is the $k$-th \emph{standard remainder}, one can perform what is now called pre-orthogonal AFD, abbreviated as POAFD, the latter is reduced to AFD in the classical setting and thus is seen to have equal force as AFD in general reproducing kernel Hilbert spaces satisfying some boundary vanishing condition (BVC).
 To illustrate the method we do the following preparations.

 Let $\{a_1,\cdots,a_m,\cdots\}$ be a finite or infinite sequence. For a fixed $m$ we define the multiple of $a_m$  in $\{a_1,\cdots,a_m\},$  denoted by $l(a_m),$ as the repeating time of the value of $a_m$ in $\{a_1,\cdots,a_m\}$ (i.e. the cardinality of the set $\{j:a_j=a_m,j\leq m\}$). We accordingly define the \emph{multiple reproducing}\emph{ kernels } \begin{eqnarray}\label{tilde}\tilde{k}_{a_m}\triangleq
 \left[\left(\frac{\partial}{\partial\overline{a}}\right)^{l(a_m)-1}
 k_{a}\right]_{a=a_m}\triangleq\left(\frac{\partial}{\partial\overline{a}}\right)^{l(a_m)-1}
 k_{a_m}\end{eqnarray}
 with the property
 \begin{eqnarray}\label{partial} f^{(l)}(a)=\langle f,\left(\frac{\partial}{\partial \overline{a}}\right)^{l}k_a\rangle, \quad l=1,2,\cdots
 \end{eqnarray}

 Let $f\in H^2({\mathbb D})$ and $\{a_k\}_{k=1}^m$ be $m$ existing points in ${\mathbb D}.$ To formulate POAFD we replace (\ref{3}) and (\ref{2}) with the selection of $a_{m+1}$ in ${\mathbb D}$ to satisfy
 \begin{align}\label{MSP-AFD}
 a_{m+1}:= \arg \max_{a\in {\mathbb D}} |\langle f, B_{m+1}^a \rangle|,
 \end{align}
 where $B_{m+1}^a$ is defined by the requirement that $\{B_1,\cdots,B_m,B_{m+1}^a\}$ is the result of the G-S orthogonalization process applied to $\{B_1,\cdots,B_m,\tilde{k}_{a_m}\}.$ In such way one does not need to formulate the reduced remainders that are crucial in 1D-AFD but unavailable in the higher dimensions. The replacements of the reduced remainders resulted from the sifting process are the newly designed pre-orthogonal algorithm and use of multiple reproducing kernels. In each concrete reproducing kernel Hilbert space context one has to show that the boundary vanishing condition (BVC) holds that implies existence of an $a_{m+1}$ in (\ref{MSP-AFD}) (the pre-orthogonal maximal selection principle). The contribution of the present paper is realization of POAFD in various Hardy spaces on tubes, consisting of settings of the question in the formulation of reproducing kernel Hilbert spaces, specifying and verification of the boundary vanishing conditions, applications of the approximation to functions of several real variables, and generalizations of the theory to regular cones. 

 We next recall the necessary notations and terminologies for the Hardy spaces on tubes (\cite{SW}). Let $B$ be an open subset in $\mathbb R^n$. We say that $T_B$ is a
 tube over $B$, if each $z\in T_B\subset \mathbb C^n$ is of the form
 $
 z=x+iy, x\in\mathbb R^n, y\in B\subset \mathbb R^n.
 $   We will be, in particular, working on the case where $B$ is an open cone.
Open cones are nonempty open subsets $\Gamma\in \mathbb R^n$ satisfying
 (1) $0\not \in \Gamma$, and (2) whenever $x,y \in \Gamma$ and $a, b>0$ then $a x +b y \in \Gamma.$
 A closed cone is the closure of an open cone. It is clear that if $\Gamma$ is an open cone then $\Gamma^{*}=\{x\in \mathbb R^n: x\cdot t \geq 0, t\in \Gamma\}$ is a close set. If, in addition,  $\Gamma^{*}$ has a non-void interior, then $\Gamma$ is said to be a regular cone, and $\Gamma^{*}$ is called the cone dual to $\Gamma.$
For instance, for $n=1,$  the upper half-plane in the complex plane can be regarded as the tube $T_{(0,\infty)}=\{z=x+iy,x\in \mathbb R, y\in (0,\infty)\}.$
 Denote by $H^2(T_{\Gamma})$ the Hardy space on $T_{\Gamma}$. We
 say $F\in H^2(T_{\Gamma})$, if $F$ is holomorphic on
 $T_{\Gamma}$ and satisfies
 $$
 \Vert F \Vert^2=\sup_{y\in \Gamma}\int_{\mathbb{R}^n}\vert
 F(x+iy)\vert^2dx<\infty.
 $$
 $H^2(T_{\Gamma})$ is a Hilbert space equipped with the inner product
 $$
 \langle F, G \rangle =\int_{\mathbb R^n}F(\xi)\overline {G(\xi)} d\xi,
 $$
 where $F(\xi)=\lim_{\eta\in \Gamma,\eta\to 0}F(\xi+i\eta)$ is the limit function in the $L^2$-norm, as well as in the a.e. pointwise sense, and so is $G(\xi)$. Existence of such non-tangential boundary limits is a fundamental result of the Hardy spaces on tubes (see \cite{SW}).
 The space $H^2(T_{\Gamma})$ is a reproducing kernel Hilbert space with the reproducing kernel
 $$
 K(w,\overline z)= \int_{\Gamma^*}e^{2\pi i \omega \cdot
 	t}\overline{e^{2\pi iz \cdot t}}dt, \quad w,z\in T_{\Gamma}.
 $$
 In this paper we are mainly concerned with the following special tube
 $T_{\Gamma_1}$, where
 $$
 \Gamma_1=\{y\in \mathbb R^n: y_1>0, y_2>0,...,y_n>0\}.
 $$
 The theory and algorithm given in \cite{Q-2D} are restricted to the bi-disc $H^2(\mathbb D^2).$ This paper treats in several unbounded domains. As given in Section 3, our main result is\\

 \noindent {\bf Main Result} {\it
 	For $F\in H^2(T_{\Gamma_1})$, there holds
 	\begin{align}\label{convergence}
 	\lim_{m\to\infty}||F-\sum_{k=1}^m\langle F,\mathcal B_k\rangle\mathcal B_k||=0,
 	\end{align}\\
 	where, as formulated in (\ref{MSP-AFD}), each element of the sequence $\{z^{(k)}\}_{k=1}^{\infty}, z^{(k)}\in \mathbb{R}^n,$ is selected according to the maximal selection principle specified in (\ref{min_pro_revise}), $\{\mathcal B_k\}$ is obtained by applying the Gram-Schmidt orthogonalization process to the corresponding multiple Cauchy-Szeg\"o kernels.
 	
 }

\bigskip

 \noindent  The well-definedness of $\{\mathcal B_k\}$ will be proved in Section $3$. As a matter of fact, discussions of the system $\{\mathcal B_k\}$ constitute a main part of the paper.


 The optimal selections of $z^{(k)}$ towards the pre-orthogonal maximal principle (\ref{min_pro_revise}) are guaranteed by BVC. In this paper we will prove the validity of BVC in $H^2(T_{\Gamma_1})$, and show that the convergent rate of POAFD is $O(m^{-\frac{1}{2}})$ for $H^2(T_{\Gamma_1},M),$ a particular subclass of functions in $H^2(T_{\Gamma_1})$. As an application of the main result, rational approximation of functions in $L^2(\mathbb R^n)$ can be obtained through dividing the $L^2$ space to $2^n$ Hardy spaces on tubes, and then use the theory for the Hardy spaces (also see \cite{Q-2D}). 
 
 As a by-product, in Appendix A we will give another kind of rational approximation in $H^2(T_{\Gamma_1})$ by using BVC in $H^2(T_{\Gamma_1}).$ Also, we will study POAFD in Hardy spaces on tubes, $H^2(T_\Gamma)$, where $\Gamma$ are some regular cones. Since treatments in the $H^2(T_{\Gamma})$ case are more complicated than those in the $H^2(T_{\Gamma_1})$ case, we put all details of the $H^2(T_{\Gamma})$ case in Appendix B.
 In fact, through verifying BVC in $H^2(T_\Gamma)$ (see Theorem \ref{g-cone} in Appendix B), we have\\ \par

 {\it  If $\Gamma$ is a regular cone such that (\ref{CS-infty}) holds, then for $F\in H^2(T_\Gamma)$ there holds
 	\begin{align*}
 	\lim_{m\to\infty}||F-\sum_{k=1}^m\langle F,\mathcal B_k\rangle\mathcal B_k||=0,
 	\end{align*}
 	where each element of $\{z^{(k)}\}_{k=1}^{\infty}, z^{(k)}\in \mathbb{R}^n,$ is selected according to the pre-orthogonal maximal principle $(\ref{min_pro_revise1})$, and $\{\mathcal B_k\}$ is obtained by applying the Gram-Schmidt orthogonalization process to the corresponding Cauchy-Szeg\"o kernels.}\\
\noindent We note that the theory is dependent on the condition (\ref{CS-infty}).
 	For regular cones in $\mathbb R^2$ (that is, $n=2$), and for polygonal cones and circular cones in $\mathbb R^n$ for any $n\ge 2,$ we can prove the validity of (\ref{CS-infty}).

 The writing plan is as follows. In Section $2$ some related and basic results for $H^2(T_{\Gamma_1})$ are given. In Section $3$ we devote to establishing POAFD in $H^2(T_{\Gamma_1})$. In Section $4$ we prove the validity of BVC in $H^2(T_{\Gamma_1})$. In Section $5$ we investigate the convergent rate of POAFD and rational approximation of functions in $L^2(\mathbb R^n)$. In Appendix A we give another kind of rational approximation in $H^2(T_{\Gamma_1})$. In Appendix B we explore POAFD in $H^2(T_\Gamma)$ for $\Gamma$ being in some particular subclasses of regular cones.

 \section{Preliminaries}
 In this section, we will review fundamental properties of $H^2(T_{\Gamma})$. For more information, see e.g. \cite{SW,JFAK,Li-Deng-Qian}.

  As given previously, $$
 \Gamma_1=\{y\in \mathbb R^n: y_1>0, y_2>0,...,y_n>0\},
 $$
 whose dual cone is $\Gamma_1^*=\overline \Gamma_1$.

 For $H^2(T_\Gamma),$ we have
 \begin{thm}[Paley-Wiener Theorem {\cite[page 101]{SW}}]\label{PW}
 	Suppose $\Gamma$ is a regular cone. Then $F\in
 	H^2(T_{\Gamma})$ if and only if
 	$$
 	F(z)=\int_{\Gamma^*}e^{2\pi i z\cdot t}f(t)dt
 	$$
 	where $f$ is a measurable function on $\mathbb R^n$ satisfying
 	$$
 	\int_{\Gamma^*}|f(t)|^2dt <\infty.
 	$$
 	Furthermore,
 	$$
 	||F||=\left (\int_{\Gamma^*}|f(t)|^2dt \right
 	)^{\frac{1}{2}}.
 	$$
 \end{thm}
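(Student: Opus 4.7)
The plan is to prove both directions via the Plancherel theorem applied slice by slice. For each $y\in \Gamma_1$ write $F_y(x)=F(x+iy)$, so that on the Fourier side the holomorphy of $F$ forces a rigid relation between the transforms of different slices. This reduces the theorem to identifying the transforms and establishing the support property.

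For the \emph{sufficiency} direction, given $f\in L^2(\overline{\Gamma_1})$ (extended by zero outside $\overline{\Gamma_1}$), define $F(z)=\int_{\overline{\Gamma_1}}e^{2\pi i z\cdot t}f(t)\,dt$. For $z=x+iy$ with $y\in\Gamma_1$, the factor $|e^{2\pi i z\cdot t}|=e^{-2\pi y\cdot t}$ is bounded by $1$ on $\overline{\Gamma_1}$, so Cauchy--Schwarz makes the integral absolutely convergent. Holomorphy follows by differentiation under the integral sign: on any compact subset of $T_{\Gamma_1}$ each $y_j$ stays bounded below by some $c>0$, which yields the exponential decay in $t_j$ needed to dominate the integrand $t_j e^{-2\pi y\cdot t}f(t)$. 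With the convention $\widehat{g}(t)=\int g(x)e^{-2\pi ix\cdot t}\,dx$, we have $\widehat{F_y}(t)=e^{-2\pi y\cdot t}f(t)\chi_{\overline{\Gamma_1}}(t)$, so Plancherel gives
\begin{equation*}
\int_{\mathbb R^n}|F(x+iy)|^2\,dx=\int_{\overline{\Gamma_1}}e^{-4\pi y\cdot t}|f(t)|^2\,dt.
\end{equation*}
The right-hand side is monotone in each $y_j$ and increases to $\int_{\overline{\Gamma_1}}|f(t)|^2\,dt$ as $y\to 0$, and is bounded above by the same quantity; hence $\|F\|^2=\int_{\overline{\Gamma_1}}|f|^2\,dt<\infty$.

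For the \emph{necessity} direction, assume $F\in H^2(T_{\Gamma_1})$. For each $y\in \Gamma_1$ the slice $F_y$ lies in $L^2(\mathbb R^n)$, with Fourier transform $\widehat{F_y}\in L^2(\mathbb R^n)$. Using the holomorphy of $F$ in each variable $z_j=x_j+iy_j$ and a one-variable contour-shift (Cauchy's theorem applied to a rectangle in the $z_j$-plane, with vanishing vertical sides justified by the $L^2$ bound on slices), one shows that the function $g(t):=e^{2\pi y\cdot t}\widehat{F_y}(t)$ is independent of $y\in \Gamma_1$. This identifies the candidate $f:=g$ and yields $F(z)=\int_{\mathbb R^n}e^{2\pi i z\cdot t}f(t)\,dt$ once we verify that $f$ is supported in $\overline{\Gamma_1}$.

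The main obstacle is exactly this support property. The uniform bound $\sup_{y\in\Gamma_1}\int_{\mathbb R^n}e^{-4\pi y\cdot t}|f(t)|^2\,dt<\infty$ must rule out positive-measure support in $\mathbb R^n\setminus\overline{\Gamma_1}=\bigcup_{j=1}^n\{t_j<0\}$. The argument is: if $|f|^2$ had positive mass on some set of the form $E=\{t:t_j\leq -\delta,\ |t_k|\leq M\text{ for }k\neq j\}$ for some $j$ and some $\delta,M>0$, then choosing $y_k=\epsilon$ for $k\neq j$ and letting $y_j\to\infty$ would force $\int_E e^{-4\pi y\cdot t}|f(t)|^2\,dt\geq e^{4\pi y_j\delta}e^{-4\pi(n-1)\epsilon M}\int_E|f|^2\,dt\to\infty$, a contradiction. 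Exhausting $\mathbb R^n\setminus\overline{\Gamma_1}$ by such sets gives $\operatorname{supp} f\subset\overline{\Gamma_1}$. With the support pinned down, taking $y\to 0$ in the $L^2$ identity from the sufficiency step yields $\|F\|^2=\int_{\overline{\Gamma_1}}|f(t)|^2\,dt$, completing the proof.
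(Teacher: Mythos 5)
The paper does not prove this statement at all: Theorem \ref{PW} is quoted as a known result, with a pointer to Stein--Weiss \cite{SW} (Chapter III there), so there is no in-paper argument to compare against. Your proposal is essentially the classical Stein--Weiss proof and is correct in outline: the sufficiency direction (Cauchy--Schwarz against $e^{-4\pi y\cdot t}\in L^1(\overline\Gamma_1)$ for absolute convergence, differentiation under the integral for holomorphy, Plancherel plus monotone convergence for the norm identity) and the support argument in the necessity direction (blowing up $e^{-4\pi y_j t_j}$ on $\{t_j\le-\delta\}$ as $y_j\to\infty$ against the uniform slice bound) are both sound. The one place you are gliding over a real technical point is the claim that $e^{2\pi y\cdot t}\widehat{F_y}(t)$ is independent of $y$ with ``vanishing vertical sides justified by the $L^2$ bound on slices'': an $L^2$ bound on slices does not by itself make the vertical sides of the rectangle tend to zero pointwise. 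The standard fix, and the actual content of the Stein--Weiss argument, is to first upgrade the $L^2$ slice bounds to locally uniform pointwise bounds via the sub-mean-value property of $|F|^2$ (so that $|F(x+iy)|^2\le C(y)\,\|F\|^2$ on compact subsets of $T_{\Gamma_1}$), or alternatively to average the contour identity over a small interval in $x_j$; either device is needed to legitimize the contour shift and the subsequent identification $\widehat{F_y}(t)=e^{-2\pi y\cdot t}f(t)$. With that ingredient supplied, your proof is complete and matches the cited reference's route.
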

 $H^2(T_{\Gamma})$ is a reproducing kernel Hilbert space whose reproducing kernel is the
 Cauchy-Szeg\" o kernel
 $$
 K(w,\overline z)= \int_{\Gamma^*}e^{2\pi i \omega \cdot
 	t}\overline{e^{2\pi iz \cdot t}}dt, \quad w,z\in T_{\Gamma}.
 $$
 The corresponding Poisson-Szeg\"o kernel is given by
 $$
 P_{y}(x)=\frac{K(z,0)K(0, \overline z)}{K(z,\overline z)}.
 $$
 and $P_y(x)\in L^p,$ for $1\leq p\leq \infty$.\\
 We will use the notation $K_\Gamma(w,\overline z)$ if we want to emphasize the specific $\Gamma$ in the context.
 In particular, if $\Gamma=\Gamma_1$ then the Cauchy-Szeg\"o kernel and the Poisson-Szeg\"o kernel can be exactly computed by the following formulas
 $$
 K_{\Gamma_1}(w,\overline z)= \int_{\overline \Gamma_1}e^{2\pi i \omega \cdot
 	t}\overline{e^{2\pi iz \cdot t}}dt = \prod_{k=1}^n \frac{-1}{2\pi
 	i(\omega_k-\overline z_k)}
 $$
 and
 $$
 P_{y}(x)=\frac{K_{\Gamma_1}(z,0)K_{\Gamma_1}(0, \overline z)}{K_{\Gamma_1}(z,\overline z)}=
 \prod_{k=1}^n \frac{y_k}{\pi(x_k^2+y_k^2)}.
 $$
 For general $\Gamma$ the integral formulas corresponding to the Cauchy-Szeg\"o and
 the Poisson-Szeg\"o kernels are, respectively, given as in
 \begin{thm}[{\cite[page 103]{SW}}]\label{rf_s}
 	If $F\in H^2(T_{\Gamma})$ then
 	$$
 	F(z)=\int_{\mathbb R^n}F(\xi) \overline{K(\xi, \overline z)} d\xi =
 	\int_{\mathbb R^n}F(\xi) {K(z,  \xi)} d\xi
 	$$
 	for all $z=x+iy\in T_{\Gamma}$, where $F(\xi)= \lim_{\eta\to 0, \eta
 		\in \Gamma}F(\xi+i \eta)$ is the limit function in the $L^2$-norm.
 \end{thm}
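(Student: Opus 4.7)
The plan is to combine the Paley--Wiener representation of $F$ with the Fourier-integral form of the Cauchy--Szeg\"o kernel and finish via Plancherel's theorem. First I would verify that the two integrals in the statement are actually equal: for $\xi\in\mathbb{R}^n$ a direct computation using $\overline{\xi_k}=\xi_k$ shows $\overline{K(\xi,\overline z)}=K(z,\xi)$, so it suffices to establish
$$
F(z)=\int_{\mathbb{R}^n}F(\xi)\,K(z,\xi)\,d\xi.
$$

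By Theorem~\ref{PW}, write $F(z)=\int_{\overline{\Gamma_1}}e^{2\pi i z\cdot t}f(t)\,dt$ with $f\in L^2(\overline{\Gamma_1})$. I would then identify the boundary value $F(\xi)$ with the $L^2$-inverse Fourier transform of the zero-extended $f$, so that $\hat F=f\chi_{\overline{\Gamma_1}}$; this identification is the step I expect to be the main obstacle, and I would handle it by noting that $e^{-2\pi y\cdot t}f(t)\chi_{\overline{\Gamma_1}}(t)\to f(t)\chi_{\overline{\Gamma_1}}(t)$ in $L^2(\mathbb{R}^n)$ as $y\to 0^+$ in $\Gamma_1$, and invoking the Plancherel isometry together with the $L^2$-convergence of the boundary limit. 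In parallel, for fixed $z=x+iy\in T_{\Gamma_1}$ I would check that $K(z,\cdot)\in L^2(\mathbb{R}^n)$ because each one-dimensional factor $(z_k-\xi_k)^{-1}$ has $L^2(\mathbb{R})$-norm $\sqrt{\pi/y_k}$, and carry out the product of elementary one-dimensional Fourier computations (which are essentially already done in the definition of $K$) to obtain
$$
K(z,\xi)=\int_{\overline{\Gamma_1}}e^{2\pi i z\cdot t}e^{-2\pi i \xi\cdot t}\,dt,\qquad \xi\in\mathbb{R}^n,
$$
so that $\widehat{\overline{K(z,\cdot)}}(t)=\overline{e^{2\pi i z\cdot t}}\,\chi_{\overline{\Gamma_1}}(t)$.

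With both ingredients in place, I would rewrite the target integral as an $L^2$-pairing $\langle F,\overline{K(z,\cdot)}\rangle$ and apply Parseval's identity:
$$
\int_{\mathbb{R}^n}F(\xi)\,K(z,\xi)\,d\xi=\bigl\langle \hat F,\,\widehat{\overline{K(z,\cdot)}}\bigr\rangle=\int_{\overline{\Gamma_1}}f(t)\,e^{2\pi i z\cdot t}\,dt=F(z),
$$
which is the desired reproducing formula. A variant that avoids the boundary identification of $\hat F$ is to first prove the analogous identity $F(x+iy)=\int_{\mathbb{R}^n}F(\xi+iy/2)K(x+iy/2,\xi)\,d\xi$ at strictly positive height using Plancherel on $\mathbb{R}^n$ with the exponentially decaying factor $e^{-2\pi y\cdot t}$ (where no limit is needed), and then pass to the limit $y\to 0^+$ in both $F(\xi+iy)\to F(\xi)$ and $K(x+iy,\cdot)\to K(z,\cdot)$ in $L^2(\mathbb{R}^n)$. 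Either route reduces the theorem to a single application of Fourier inversion on $L^2(\overline{\Gamma_1})$.
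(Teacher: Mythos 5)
The paper does not prove Theorem \ref{rf_s}; it is quoted as a known fact with a pointer to Stein--Weiss \cite{SW}, so there is no in-paper argument to compare against. Your proposal is the standard proof of that cited result and it is sound: the identity $\overline{K(\xi,\overline z)}=K(z,\xi)$ for real $\xi$ is a one-line check, the identification of the boundary function with $\mathcal{F}^{-1}(f\chi_{\overline{\Gamma_1}})$ follows from dominated convergence ($e^{-2\pi y\cdot t}\le 1$ on $\overline{\Gamma_1}$) combined with the Plancherel isometry and the $L^2$-boundary limit guaranteed by Theorem \ref{PW}, and the final step is Parseval applied to the $L^2$ functions $F(\cdot)$ and $K(z,\cdot)$. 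Your alternative route at strictly positive height followed by a limit is equally valid and slightly cleaner in that it avoids identifying $\hat F$ on the boundary. Either version would serve as a complete proof of the statement the paper takes for granted.
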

 \begin{thm}[{\cite[page 106]{SW}}]\label{rf_p}
 	If $F\in H^2(T_{\Gamma})$, then
 	$$
 	F(z)=  \int_{\mathbb R^n}F(\xi) {P_y(x-\xi)} d\xi
 	$$
 	for all $z=x+iy\in T_{\Gamma}$, where $F(\xi)= \lim_{\eta\to 0, \eta
 		\in \Gamma}F(\xi+i \eta)$ is the limit function in the $L^2$-norm.
 \end{thm}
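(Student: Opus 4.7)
The plan is to reduce the identity to a statement about Fourier transforms via the Paley--Wiener representation in Theorem \ref{PW}. First I would write $F(z) = \int_{\overline{\Gamma_1}} e^{2\pi i z\cdot t} f(t)\, dt$ for some $f \in L^2(\overline{\Gamma_1})$; taking the limit $\eta \to 0$ with $\eta \in \Gamma_1$ in the $L^2$-norm gives that the boundary trace $F(\xi)$ is the inverse Fourier transform of $f\chi_{\overline{\Gamma_1}}$, so $\widehat{F}(t) = f(t)\chi_{\overline{\Gamma_1}}(t)$ as an element of $L^2(\mathbb R^n)$.

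Next I would compute the Fourier transform of the Poisson--Szeg\"o kernel. Because $P_y(x) = \prod_{k=1}^n \frac{y_k}{\pi(x_k^2+y_k^2)}$ is a tensor product of one-dimensional Poisson kernels, Fubini and the classical one-dimensional formula yield
$$
\widehat{P_y}(t) \;=\; \prod_{k=1}^n e^{-2\pi y_k |t_k|}.
$$
On $\overline{\Gamma_1}$ each $t_k \geq 0$, so $\widehat{P_y}(t) = e^{-2\pi y\cdot t}$ there. Note that $P_y \in L^1(\mathbb R^n)$ and $F(\cdot)\in L^2(\mathbb R^n)$, so the convolution $(F*P_y)(x) = \int_{\mathbb R^n}F(\xi)P_y(x-\xi)\,d\xi$ is well defined and lies in $L^2(\mathbb R^n)$.

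Applying the Fourier transform to this convolution and using Plancherel together with the convolution theorem, the Fourier transform of $F*P_y$ equals $f(t)\chi_{\overline{\Gamma_1}}(t)\,e^{-2\pi y\cdot t}$. Inverting the Fourier transform, I would obtain
$$
(F*P_y)(x) \;=\; \int_{\overline{\Gamma_1}} f(t)\, e^{-2\pi y\cdot t}\, e^{2\pi i x\cdot t}\, dt \;=\; \int_{\overline{\Gamma_1}} f(t)\, e^{2\pi i z\cdot t}\, dt \;=\; F(z),
$$
the middle equality being valid because $e^{-2\pi y\cdot t} \in L^\infty(\overline{\Gamma_1})$ (since $y \in \Gamma_1$), which keeps the integrand in $L^1\cap L^2$.

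The main technical point, and the only place requiring care, is the interchange of the $t$-integral in the Paley--Wiener representation with the $\xi$-integral in the convolution, and the passage to the $L^2$ boundary limit. I would justify this either by first assuming $f \in L^1\cap L^2(\overline{\Gamma_1})$ (so that absolute convergence and Fubini apply directly), obtaining the identity pointwise, and then extending to general $f \in L^2$ by a density argument using continuity of both sides in the $L^2$ norm of $f$ (the right-hand side because convolution with $P_y \in L^1$ is continuous on $L^2$, the left-hand side because point evaluation is continuous on $H^2(T_{\Gamma_1})$ via the Cauchy--Szeg\"o reproducing kernel).
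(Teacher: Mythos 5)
The paper does not actually prove Theorem \ref{rf_p}: it is quoted as a standard preliminary from Stein--Weiss \cite{SW}, so there is no in-paper argument to compare against. Your proof is the standard Fourier-multiplier argument for tubes over the octant and is essentially correct: identify $\widehat{F}(t)=f(t)\chi_{\overline{\Gamma_1}}(t)$ via Theorem \ref{PW}, compute $\widehat{P_y}(t)=\prod_{k=1}^n e^{-2\pi y_k|t_k|}$ from the tensor-product structure, and observe that on the support of $\widehat F$ this multiplier is exactly $e^{-2\pi y\cdot t}$. Two small points need tightening. First, to pass from the $L^2$ identity $\widehat{F\ast P_y}=f\chi_{\overline{\Gamma_1}}e^{-2\pi y\cdot t}$ to the pointwise inversion $(F\ast P_y)(x)=\int_{\overline{\Gamma_1}}f(t)e^{2\pi i z\cdot t}\,dt=F(z)$ you need $f(t)e^{-2\pi y\cdot t}\in L^1(\overline{\Gamma_1})$; boundedness of $e^{-2\pi y\cdot t}$ alone does not give this, but Cauchy--Schwarz together with $\int_{\overline{\Gamma_1}}e^{-4\pi y\cdot t}\,dt=K(z,\overline z)<\infty$ for $y\in\Gamma_1$ does. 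Second, Plancherel only yields the identity for almost every $x$, whereas the theorem asserts it for all $z$; this is repaired by noting that both sides are continuous in $x$ (the right side because $F$ is holomorphic, the left side because $P_y\in L^2$ and translation is $L^2$-continuous), so the a.e.\ equality upgrades to everywhere. With these repairs the argument is complete, and your closing density paragraph becomes superfluous, since the $L^1\ast L^2$ convolution theorem already covers general $f\in L^2(\overline{\Gamma_1})$.
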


 \begin{thm}[{\cite[page 119]{SW}}]\label{g-rf-p}
 	Suppose $\Gamma$ is a regular cone in $\mathbb R^n$, and $F\in H^p(T_{\Gamma}), 1\leq p<\infty$, then
 	$$
 	\lim_{y\in\Gamma,y\to 0}\int_{\mathbb R^n}|F(x+iy)-F(x)|^pdx=0,
 	$$
 	and
 	$$
 	F(x+iy)=\int_{\mathbb R^n}F(\xi)P_y(x-\xi)d\xi,
 	$$
 	where $F(\xi)$ is the limit function, whose existence is in the norm sense, as well as in the subsequence-pointwise convergence sense (\cite[Chapter III,Theorem 5.5]{SW}).
 \end{thm}

 \section{POAFD in $H^2(T_{\Gamma_1})$}

 Suppose that $\{z^{(k)}\}_{k=1}^\infty$ is a sequence of distinct points in $T_{\Gamma_1}$. Under such assumption, $\{K_{\Gamma_1}(\cdot,\overline {z^{(k)}})\}_{k=1}^\infty$ are linearly
 independent.
 Let $\{ \mathcal B_k\}_{k=1}^m$ be the Gram-Schmidt (G-S)
 orthogonalization of $\{K_{\Gamma_1}(\cdot, \overline{z^{(k)}})\}_{k=1}^m$. For a function in $H^2(T_{\Gamma_1})$, One can define the $m$-th partial sum
 \begin{align}\label{F_orth}
 S_m(F)=\sum_{k=1}^m\langle F, \mathcal B_k \rangle
 \mathcal B_k.
 \end{align}
 Since $\text{span}\{\mathcal B_1,...,\mathcal B_m\}=\text{span}\{K_{\Gamma_1}(\cdot,\overline {z^{(1)}}),...,K_{\Gamma_1}(\cdot,\overline {z^{(m)}})\}$, and
 $F-S_m(F)$ is in the orthogonal complement of the span, we have
 \begin{align}\label{Sm-interpolation}
 F(z^{(k)})=\langle F, K_{\Gamma_1}(\cdot,\overline {z^{(k)}}) \rangle=\langle S_m(F), K_{\Gamma_1}(\cdot,\overline {z^{(k)}}) \rangle.
 \end{align}

 In the following discussion, we remove the restriction that all elements of $\{z^{(k)}\}_{k=1}^\infty$ are distinct from each other, i.e., there may exist $k\neq l$ such that $z^{(k)}=z^{(l)}.$ In such case, the original definition of $S_m(F)$ is meaningless in such situation. Therefore, we need to define the generalized $S_m(F)$ that is denoted by $\widetilde {S_m}(F)$. The generalization is intrinsically related to the POAFD methodology.

 Set $\phi_{z^{(k)}}=\phi_{k}=K_{\Gamma_1}(\cdot, \overline{z^{(k)}})$. By the G-S orthogonalization process, we have
 \begin{align}\label{G-S}
 \begin{split}
 \gamma_1&=\gamma_{\{z^{(1)}\}}= \phi_{1},\\
 \gamma_k&=\gamma_{\{z^{(1)},...,z^{(k)}\}}=\phi_{k}-\sum_{l=1}^{k-1}{\langle \phi_k, \frac{\gamma_l}{||\gamma_l||} \rangle}\frac{\gamma_l}{||\gamma_l||}, \quad k\geq 2\\
 \mathcal B_k &=\mathcal B_{\{z^{(1)},...,z^{(k)}\}}=\frac{\gamma_k}{||\gamma_k||}.
 \end{split}
 \end{align}
 To define $\widetilde {S_m}(F)$, it suffices to define $\{\mathcal B_k\}_{k=1}^m$ for the case that there may happen $z^{(k)}=z^{(l)}, k\neq l$.
 Such kind of discussion on $\{\mathcal B_k\}_{k=1}^m$ should be regarded as multiple poles Takenaka-Malmquist (TM) system, and is automatically motivated by 1D AFD or pre-orthogonal maximal selection principle in POAFD methods.
 In fact, it has been, respectively, made in the one complex variable \cite{QWe}, quaternionic analysis \cite{QSW} and several complex variables \cite{Q-2D} settings. The general discussion on such property of $\{\mathcal B_k\}_{k=1}^m$ is included in \cite{Q-2D}, and recently, a detailed proof is given in \cite{QD} and \cite{Qian-Advance}.

  The present context that we deal with is with the several complex variables setting. For simplicity, we interpret this for $z=(z_1, z_2)\in \mathbb C^2$.
 Unlike the one complex variable case, the generalization of TM systems in higher dimensions is more subtle since the dimension of the linear space spanned by the $h$-th order ($h\geq 1$) partial derivatives is more than $2.$ In this paper we provide a strategy that is in the spirit of the one dimensional case. Let $l_k$ be the cardinality of the set $\{j:z^{(j)}=z^{(k)}, j\leq k\}$. Suppose that $\vec d=(d_1,d_2)\in \mathbb P^2=\{\xi=(\xi_1,\xi_2)\in \mathbb C^2:|\xi_1|^2+|\xi_2|^2=1\},$ and $\vec d$ is fixed in the following discussion. As in the one dimensional case, we define
 \begin{align*}
 \widetilde \phi_{z^{(k)}}^{\vec d}=\frac{1}{(l_k-1)!}(\vec d\cdot \bigtriangledown)^{(l_k-1)}\phi_z|_{z=z^{(k)}},
 \end{align*}
 where $\vec d\cdot \bigtriangledown=d_1\frac{\partial}{\partial \overline z_1}+d_2\frac{\partial}{\partial \overline z_2}.$ Note that $\sum_{|\alpha|=l_k-1}\frac{\partial^\alpha\phi_{z}|_{z=z^{(k)}}}{\alpha!}\vec d^\alpha=\frac{1}{(l_k-1)!}(\vec d\cdot \bigtriangledown)^{l_k-1}\phi_z|_{z=z^{(k)}},$ where $\alpha=(\alpha_1,\alpha_2),$ $|\alpha|=\sum_{j=1}^2 \alpha_j,$ $\alpha!=\prod_{j=1}^2\alpha_j!$ and $\partial^\alpha\phi_z=\partial^{\alpha_1}\partial^{\alpha_2}\phi_z=\frac{\partial^{|\alpha|}\phi_z}{\partial \overline z_1^{\alpha_1}\partial \overline z_2^{\alpha_2}}.$
 Next we assume that $\{\mathcal B_k\}_{k=1}^m$ is the G-S orthogonalization of $\{\widetilde \phi_{z^{(k)}}^{\vec d}\}_{k=1}^m.$ Set $w=z^{(m)}+r\vec d$ with $\vec d\in \mathbb P^2.$ Since $\phi_z=K_{\Gamma_1}(\cdot,\overline z)$ is anti-analytic in $z,$ by the Taylor expansion with respect to the directional derivatives in the direction $\vec d,$ we have that
 \begin{align*}
 \phi_w &=\mathcal T^{(m)}(\phi_{z^{(m)}+r\vec d}) + \mathcal R^{(m)}(\phi_{z^{(m)}+r\vec d})\\
 &=\sum_{|\alpha|\leq l_m-1}\frac{\partial^\alpha \phi_z|_{z=z^{(m)}}}{\alpha !}(r\vec d)^\alpha + \sum_{|\alpha| \geq l_m}\frac{\partial^\alpha \phi_z|_{z=z^{(m)}}}{\alpha !}(r\vec d)^\alpha,
 \end{align*}
 and consequently,
 \begin{align*}
 \mathcal T^{(m)}(\phi_{z^{(m)}+r\vec d})=\sum_{j=1}^m \langle \mathcal T^{(m)}(\phi_{z^{(m)}+r\vec d}),\mathcal B_j\rangle \mathcal B_j.
 \end{align*}
 Then we have
 \begin{align*}
 &\lim_{r\to 0}\mathcal B_{\{z^{(1)},...,z^{(m)},w\}}\\
 &=\lim_{r\to 0}\frac{\phi_w-\sum_{j=1}^m\langle \phi_w,\mathcal B_j \rangle \mathcal B_j}{||\phi_w-\sum_{j=1}^m\langle \phi_w,\mathcal B_j \rangle \mathcal B_j||}\\
 &=\lim_{r\to 0}\frac{\phi_w-\mathcal T^{(m)}(\phi_{z^{(m)}+r\vec d})-\sum_{j=1}^m\langle \phi_w-\mathcal T^{(m)}(\phi_{z^{(m)}+r\vec d}),\mathcal B_j \rangle \mathcal B_j}{||\phi_w -\mathcal T^{(m)}(\phi_{z^{(m)}+r\vec d})-\sum_{j=1}^m\langle \phi_w-\mathcal T^{(m)}(\phi_{z^{(m)}+r\vec d}),\mathcal B_j \rangle \mathcal B_j||}\\
 &=\lim_{r\to 0}\frac{\frac{\phi_w-\mathcal T^{(m)}(\phi_{z^{(m)}+r\vec d})}{r^{l_m}}-\sum_{j=1}^m\langle \frac{\phi_w-\mathcal T^{(m)}(\phi_{z^{(m)}+r\vec d})}{r^{l_m}},\mathcal B_j \rangle \mathcal B_j}{||\frac{\phi_w -\mathcal T^{(m)}(\phi_{z^{(m)}+r\vec d})}{r^{l_m}}-\sum_{j=1}^m\langle \frac{\phi_w-\mathcal T^{(m)}(\phi_{z^{(m)}+r\vec d})}{r^{l_m}},\mathcal B_j \rangle \mathcal B_j||}\\
 &=\frac{\sum_{|\alpha|=l_m}\frac{\partial^\alpha\phi_{z}|_{z=z^{(m)}}}{\alpha!}\vec d^\alpha-\sum_{j=1}^m\langle \sum_{|\alpha|=l_m}\frac{\partial^\alpha\phi_{z}|_{z=z^{(m)}}}{\alpha!}\vec d^\alpha,\mathcal B_j \rangle \mathcal B_j}{||\sum_{|\alpha|=l_m}\frac{\partial^\alpha\phi_{z}|_{z=z^{(m)}}}{\alpha!}\vec d^\alpha-\sum_{j=1}^m\langle \sum_{|\alpha|=l_m}\frac{\partial^\alpha\phi_{z}|_{z=z^{(m)}}}{\alpha!}\vec d^\alpha,\mathcal B_j \rangle \mathcal B_j||},
 \end{align*}
 which means that the $l_m$-th directional derivative with the direction $\vec d$ of $\phi_{z}$ is involved in the G-S orthogonalization process when $r$ tends to $0$.

 Being similar with \cite{Q-2D}, we introduce $\mathcal A_k,k=1,2,...,$ the function set consisting of all possible directional derivatives of the functions in $\mathcal A_{k-1}$, where $\mathcal A_0=\{K_{\Gamma_1}(\cdot,\overline z): z\in T_{\Gamma_1}\}.$ Denote by $\mathcal A$ the set
 $$
 \mathcal A=\cup_{k=0}^\infty \mathcal A_k.
 $$
 Note that we do not make the elements in $\mathcal A_k$ normalized.
 Given a sequence of points $\{z^{(k)}\}_{k=1}^m$ in $T_{\Gamma_1},$ we can find a sequence of elements $\{\Phi_{z^{(k)}}\}_{k=1}^m$ in $\mathcal A$ such that $\{\mathcal B_k\}_{k=1}^m$ is the orthonormalization of $\{\Phi_{z^{(k)}}\}_{k=1}^m$ in the above sense. We actually make $\Phi_{z^{(k)}}=\widetilde \phi_{z^{(k)}}^{\vec d}$ with a fixed direction $\vec d.$
 Define
 \begin{align}\label{general_F}
 \widetilde {S_m}(F)=\sum_{k=1}^m \langle F,\mathcal B_k \rangle \mathcal B_k.
 \end{align}
 With the same reason as for (\ref{Sm-interpolation}) we have $$\langle \widetilde {S_m}(F), \Phi_{z^{(k)}}\rangle = \langle F, \Phi_{z^{(k)}}\rangle, \quad k=1,2,...,m.$$
 Note that $S_m(F)=\widetilde {S_m}(F)$ if all $z^{(k)}$s' are distinct from each other.
 For $\mathbb C^n,n>2$, we can similarly define $\mathcal A$ and $\widetilde {S_m}(F)$. Hereafter, we do not distinguish between $S_m(F)$ and $\widetilde {S_m}(F)$, and then we adopt the notation $S_m(F)$ for both cases.\par
\bigskip

 \begin{remark}
 	The treatment given in the above is in the spirit of the one dimensional case, which is, however, a special one in the higher dimensional case. On the other hand, due to the Cauchy-Riemann equations between the partial derivatives, there is a limited number of $h$-order partial or directional derivatives involved for each fixed $h.$
 	It can be easily computed that there exist $\binom{h+n-1}{n-1}$ linearly independent $h$-th partial derivatives. If we want to use as less as high order partial derivatives, then after at most $\sum_{h=1}^k \binom{h+n-1}{n-1}= \binom{k+n}{n}$ interactive steps one $(k+1)$-th partial derivative yet has to be involved.
 \end{remark}

 The main procedure of constructing $S_m(F)$ using an optimal $\{z^{(j)}\}_{j=1}^m$ leading to fast convergence to $F$ ($m\to \infty$) in the $H^2$-norm is as follows.
 Every time when we already have $m$ points in $T_{\Gamma_1}$,
 we select $z^{(m+1)}\in T_{\Gamma_1}$ to satisfy
 \begin{align}\label{min_pro_revise}
 \begin{split}
 z^{(m+1)}& :=\arg \max_{z\in T_{\Gamma_1}} |\langle F,\mathcal B_{m+1}^z\rangle|,
 \end{split}
 \end{align}
where $\{\mathcal B_1,...,\mathcal B_m,\mathcal B_{m+1}^z\}$ is the orthogonalization of $\{\mathcal B_1,...,\mathcal B_m,\Phi_z\}.$
Existence of such $z^{(m+1)}$ is proved in Section 4. It is, in fact, a consequence of BVC in $H^2(T_{\Gamma_1})$.
 At this moment, we assume that such $z^{(m+1)}$ exists. The convergence of $S_m(F)$ in the
 $H^2$-norm sense is given by the following result.

 \bigskip
 \begin{thm}\label{thm1}
 	For $F\in H^2(T_{\Gamma_1})$, we have
 	\begin{align}\label{convergence}
 	||F -S_m(F)||\to 0, \text{as }  m\to \infty,
 	\end{align}
 	where each element of $\{z^{(k)}\}_{k=1}^{\infty}$ is selected according to the maximal selection principle $(\ref{min_pro_revise})$.
 \end{thm}
 \begin{proof}
 	By (\ref{general_F}) and the Riesz-Fischer theorem, we know that there
 	exists $S_{\infty}(F)\in H^2(T_{\Gamma_1})$ satisfying
 	\begin{align}\label{lim1}
 	S_\infty(F)=\lim_{m\to \infty}S_m(F) \quad \text{in the
 		$H^2$-norm}.
 	\end{align}
 	If $(\ref{convergence})$ does not hold, then
 	\begin{align}
 	g=F-S_\infty(F)\not\equiv 0.
 	\end{align}
 	
 	By the Identity theorem in several complex variables analysis, we must have $b\not\in \{z^{(k)}\}_{k=1}^\infty$ such that
 	\begin{align}\label{g_b}
 	|g(b)|= \delta_0>0.
 	\end{align}
 	
 	On one hand,
 	\begin{align}\label{two term}
 	\begin{split}
 	\delta_0=|g(b)|&=| F(b)-S_\infty(F)(b)|\\
 	&\leq
 	|F(b)-S_m(F)(b)|+|S_\infty(F)(b)-S_m(F)(b)|.
 	\end{split}
 	\end{align}
 	By $(\ref{lim1})$, there exists $ N_1>0$ such that when $m>N_1$, the second term of
 	$(\ref{two term})$
 	\begin{align*}
 	|S_\infty(F)(b)-S_m(F)(b)| &=|\langle S_\infty(F)(\cdot)-S_m(F)(\cdot), K_{\Gamma_1}(\cdot, \overline b) \rangle|\\
 	&\leq ||S_\infty(F)-S_m(F)|| ||K_{\Gamma_1}(\cdot, \overline b)||\\
 	&<\frac{\delta_0}{2},
 	\end{align*}
 	where the second inequality follows from the Cauchy-Schwartz inequality.
 	Hence, we have
 	$$
 	|F(b)-S_m(F)|>\frac{\delta_0}{2}.
 	$$
 	On the other hand, by (\ref{Sm-interpolation}) we have
 	\begin{align*}
 	F(b)=S_{m+1}^b(F)(b),
 	\end{align*}
 	where $S_{m+1}^b(F)$ is defined as (\ref{general_F}), which corresponds to
 	$(z^{(1)},...,z^{(m)}, b)$.
 	By $(\ref{min_pro_revise})$, we have
 	\begin{align*}
 	||F||^2-||S_{m+1}^b(F)||^2 &=||F-S_{m+1}^b(F)||^2\\
 	&\geq ||F-S_{m+1}(F)||^2=||F||^2-||S_{m+1}(F)||^2.
 	\end{align*}
 	Therefore, there exists $N_2>0$ such that when $m>N_2$,
 	\begin{align}
 	\begin{split}
 	|F(b)-S_m(F)(b)| &= |S_{m+1}^b(F)(b)-S_m(F)(b)|\\
 	& \leq ||S_{m+1}^b(F)-S_m(F)|| ||K_{\Gamma_1}(\cdot, \overline b)||\\
 	& =||K_{\Gamma_1}(\cdot, \overline b)||(\sqrt{||S_{m+1}^b(F)||^2-||S_m(F)||^2})\\
 	& \leq ||K_{\Gamma_1}(\cdot, \overline b)||(\sqrt{||S_{m+1}(F)||^2-||S_m(F)||^2})\\
 	& =||K_{\Gamma_1}(\cdot, \overline b)||{||S_{m+1}(F)-S_m(F)||}\\
 	& <\frac{\delta_0}{2}.
 	\end{split}
 	\end{align}
 	This proves the theorem.
 \end{proof}

 Immediately, we have the following corollary.
 \begin{cor}
 	If all the conditions in Theorem \ref{thm1} are fulfilled, then, for
 	any compact subset $A$ in $T_{\Gamma_1}$,
 	$$
 	S_m(F)(z) =  \sum_{k=1}^m\langle F,\mathcal B_k\rangle \mathcal B_k(z), \quad z\in A,
 	$$
 	uniformly converges to $F(z)$ as $m\to \infty$.
 \end{cor}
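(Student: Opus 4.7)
The plan is to leverage the reproducing kernel property of $H^2(T_{\Gamma_1})$ to transfer the $H^2$-norm convergence established in Theorem \ref{thm1} to uniform convergence on compact subsets. The key observation is that pointwise evaluation at any $z \in T_{\Gamma_1}$ is a bounded linear functional realized by $K(\cdot, \overline z)$, and the operator norm of this functional varies continuously with $z$.

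First I would write, for each $z \in T_{\Gamma_1}$ and each $m$,
\begin{align*}
|F(z) - F_{A_m}^*(z)| = |\langle F - F_{A_m}^*, K(\cdot,\overline z)\rangle| \leq \|F - F_{A_m}^*\|\, \|K(\cdot,\overline z)\|,
\end{align*}
by the reproducing formula of Theorem \ref{rf_s} together with the Cauchy-Schwarz inequality. Next I would compute $\|K(\cdot,\overline z)\|^2 = K(z,\overline z) = \prod_{k=1}^n \frac{1}{4\pi y_k}$ using the explicit product form of the Cauchy-Szeg\"o kernel given in Section 2.

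Then I would use the fact that if $A$ is a compact subset of $T_{\Gamma_1}$, then there exists $\delta>0$ such that $y_k \geq \delta$ for $k=1,\ldots,n$ and for every $z=x+iy\in A$; this is the standard observation that compacta in the tube stay a positive distance from the distinguished boundary $\mathbb R^n$. Consequently $M_A := \sup_{z\in A}\|K(\cdot,\overline z)\| \leq (4\pi\delta)^{-n/2} < \infty$, so that
\begin{align*}
\sup_{z\in A} |F(z) - F_{A_m}^*(z)| \leq M_A\, \|F - F_{A_m}^*\|.
\end{align*}
By Theorem \ref{thm1} the right-hand side tends to zero as $m\to\infty$, which yields the desired uniform convergence on $A$.

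There is essentially no obstacle here; the only point requiring any attention is verifying the uniform bound on $\|K(\cdot,\overline z)\|$ over compacta, which is immediate from the explicit product form of $K$ and the separation of $A$ from the boundary of $T_{\Gamma_1}$. Everything else is a direct consequence of the reproducing property combined with Theorem \ref{thm1}.
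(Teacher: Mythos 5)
Your proof is correct and is exactly the intended argument: the paper states this corollary as an immediate consequence of Theorem \ref{thm1} without writing out a proof, and the route via the reproducing property, Cauchy--Schwarz, and the uniform bound $\|K(\cdot,\overline z)\|^2=K(z,\overline z)=\prod_{k=1}^n(4\pi y_k)^{-1}\leq(4\pi\delta)^{-n}$ on compacta is the standard one. Nothing is missing.
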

 The proof of the corollary follows from the Cauchy-Schwartz inequality and the fact that the $L^2$-norm of the Cauchy-Szeg\"o kernel is uniformly bounded in any compact subset $A.$
 \medskip

 \section{The Maximal Selection Principle of POAFD}
 Denote by $\partial T_{\Gamma_1}$ the boundary of $T_{\Gamma_1}$. Since $\{z^{(1)},...,z^{(m)}\}$ are previously fixed in the procedure of selecting $z^{(m+1)}$ in (\ref{min_pro_revise}), the existence of $z^{(m+1)}$ is given by the following lemmas. For $z\neq z^{(k)},k=1,...,m,$ we recall that
 \begin{align*}
 S_{m+1}^z(F)=\sum_{k=1}^m\langle F,\mathcal B_k\rangle + \langle F,\mathcal B_{m+1}^z\rangle \mathcal B_{m+1}^z,
 \end{align*}
 where $\{\mathcal B_k\}_{k=1}^m$ is the orthogonalization of $\{\Phi_{z^{(k)}}\}_{k=1}^m,$ and $\{\mathcal B_1,...,\mathcal B_m,\mathcal B_{m+1}^z\}$ is the orthogonalization of $\{\mathcal B_1,...,\mathcal B_m,\phi_z\}$. Note that $\phi_z=K_{\Gamma_1}(\cdot,\overline z)$ and $\{\Phi_{z^{(k)}}\}\subset \mathcal A,$ where $\mathcal A$ contains all higher order directional derivatives of $K_{\Gamma_1}(w,\overline z)$ with respect to $z$.

 First, we show that
 \begin{lem}\label{re-ex-lem}
 	Suppose that $F\in H^2(T_{\Gamma_1})$ and $z^{(j)}\in T_{\Gamma_1},
 	j=1,...,m,$ are fixed.
 	 If
 	\begin{align}\label{g-ex-cond}
 	\begin{split}
 	\lim_{z \to \beta} \frac{|F(z)|}{||\phi_z||}
 	&= 0,\\
 	\lim_{z\to \beta}\frac{|\Phi_{z^{(j)}}(z)|}{||\phi_z||}&=0,\quad j=1,2,...,m,
 	\end{split}
 	\end{align}
 	where $\beta\in \partial T_{\Gamma_1}$, then
 	\begin{align}\label{g-ex-boundary}
 	\lim_{z\to \beta}||F- S_{m+1}^z(F)||=
 	||F-S_{m}(F)||,
 	\end{align}
 	and if
 	\begin{align}\label{g-ex-cond1}
 	\begin{split}
 	\lim_{|z| \to \infty}
 	\frac{|F(z)|}{||\phi_z||} &= 0,\\
 	\lim_{|z|\to \infty}\frac{|\Phi_{z^{(j)}}(z)|}{||\phi_z||}&=0,\quad j=1,2,...,m,
 	\end{split}
 	\end{align}
 	then
 	\begin{align}\label{g-ex-infinity}
 	\lim_{|z| \to \infty}||F- S_{m+1}^z(F)||=
 	||F-S_{m}(F)||.
 	\end{align}
 \end{lem}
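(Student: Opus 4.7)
\medskip

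\noindent\textit{Proof plan.} The plan is to exploit the orthogonal decomposition produced by the Gram--Schmidt step. Since $F_{A_{m+1}}^*=F_{A_m}^*+\langle F,\mathcal{B}_{m+1}\rangle\,\mathcal{B}_{m+1}$ and $\mathcal{B}_{m+1}$ is orthogonal to $\mathrm{span}\{\mathcal{B}_1,\dots,\mathcal{B}_m\}$, Pythagoras yields
$$
\|F-F_{A_{m+1}}^*\|^2=\|F-F_{A_m}^*\|^2-|\langle F,\mathcal{B}_{m+1}\rangle|^2.
$$
Both conclusions (\ref{g-ex-boundary}) and (\ref{g-ex-infinity}) therefore reduce to showing that $\langle F,\mathcal{B}_{m+1}\rangle\to 0$ as $z^{(m+1)}\to\beta\in\partial T_{\Gamma_1}$ (respectively, as $|z^{(m+1)}|\to\infty$).

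To establish this limit I would write $\mathcal{B}_{m+1}=\beta_{m+1}/\|\beta_{m+1}\|$ with
$$
\beta_{m+1}=\Phi_{z^{(m+1)}}-\sum_{k=1}^{m}\langle \Phi_{z^{(m+1)}},\mathcal{B}_k\rangle\,\mathcal{B}_k,
$$
and divide both numerator and denominator of $\langle F,\mathcal{B}_{m+1}\rangle=\langle F,\beta_{m+1}\rangle/\|\beta_{m+1}\|$ by $\|\Phi_{z^{(m+1)}}\|$. Since the points $z^{(1)},\dots,z^{(m)}$ are frozen, each $\mathcal{B}_k$ is a fixed finite linear combination of $\Phi_{z^{(1)}},\dots,\Phi_{z^{(k)}}$ whose coefficients do not depend on $z^{(m+1)}$. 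The second line of hypothesis (\ref{g-ex-cond}) then gives $|\langle \Phi_{z^{(m+1)}},\mathcal{B}_k\rangle|/\|\Phi_{z^{(m+1)}}\|\to 0$ for every $k\le m$, and combined with the first line of (\ref{g-ex-cond}) this drives the normalized numerator to $0$. Simultaneously, the Pythagorean identity
$$
\|\beta_{m+1}\|^2=\|\Phi_{z^{(m+1)}}\|^2-\sum_{k=1}^{m}|\langle \Phi_{z^{(m+1)}},\mathcal{B}_k\rangle|^2
$$
forces $\|\beta_{m+1}\|/\|\Phi_{z^{(m+1)}}\|\to 1$, so the quotient tends to $0$ as required. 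The case $|z^{(m+1)}|\to\infty$ is handled verbatim with (\ref{g-ex-cond1}) replacing (\ref{g-ex-cond}).

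The main obstacle is controlling the denominator: a priori $\|\beta_{m+1}\|$ could collapse and produce a $0/0$ indeterminate form. Verifying via the Pythagorean computation above that $\|\beta_{m+1}\|/\|\Phi_{z^{(m+1)}}\|\to 1$ is the technical heart of the argument, and is precisely where the second line of each hypothesis pays for itself. A minor point worth flagging is that the scalars relating each $\mathcal{B}_k$ to $\Phi_{z^{(1)}},\dots,\Phi_{z^{(k)}}$ must be treated as fixed constants so that the hypothesized limits pass termwise through the Gram--Schmidt expansion; this is legitimate because those coefficients depend only on the frozen inner products $a_{j,k}$ with $j,k\le m$.
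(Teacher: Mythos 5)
Your proposal is correct and follows essentially the same route as the paper: both reduce the claim via the Pythagorean identity to showing $\langle F,\mathcal B_{m+1}\rangle\to 0$, then normalize the Gram--Schmidt vector $\beta_{m+1}$ by $\|\Phi_{z^{(m+1)}}\|$ so that the hypotheses send the numerator to $0$ while the denominator tends to $1$. Your explicit remark that the coefficients expressing each $\mathcal B_k$ ($k\le m$) in terms of $\Phi_{z^{(1)}},\dots,\Phi_{z^{(m)}}$ are frozen is a small but welcome clarification that the paper leaves implicit.
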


 \begin{proof}
 	We adopt the notation given in (\ref{G-S}). Based on (\ref{general_F}), we have
 	$$
 	\Vert F-S_{m+1}^z(F) \Vert^2=\Vert F-S_{m}(F)\Vert^2-|\langle F, \mathcal B_{m+1}^z \rangle|^2.
 	$$
 	To get (\ref{g-ex-boundary}), we need to show $|\langle F,\mathcal B_{m+1}^z \rangle|\to 0$ as $z\to \beta$.
 	In fact, by the Gram-Schmidt orthogonalization process,
 	\begin{align}
 	\begin{split}
 	|\langle F,\mathcal B_{m+1}^z\rangle|
 	& = \frac{|\langle F, \phi_{z} - \sum_{k=1}^{m}\langle \phi_z, \mathcal B_k\rangle\mathcal B_k\rangle|}{\|\phi_z - \sum_{k=1}^{m}\langle \phi_z, \mathcal B_k\rangle\mathcal B_k\|}\\
 	& = \frac{|\langle F, \frac{\phi_z}{\|\phi_z\|} - \sum_{k=1}^{m}\langle \frac{\phi_z}{\|\phi_z\|}, \mathcal B_k\rangle\mathcal B_k\rangle|}{\sqrt{1 - \sum_{k=1}^{m}|\langle \frac{\phi_z}{\|\phi_z\|}, \mathcal B_k\rangle|^2}}.
 	\end{split}
 	\end{align}
 	Note that each $\mathcal B_k$ is a linear combination of $\{\Phi_{z^{(j)}}\}_{j=1}^k$ as given in Section 3. Then, by (\ref{g-ex-cond}) we can have (\ref{g-ex-boundary}). We can also conclude (\ref{g-ex-infinity}) in a similar way. The proof is complete.
 \end{proof}

Note the conditions (\ref{g-ex-cond}) and (\ref{g-ex-cond1}) essentially follow from
 \begin{align}\label{weak-bvc-tube}
 \begin{split}
 \lim_{z \to \beta\in \partial T_{\Gamma_1}} \frac{|F(z)|}{||\phi_z||} &=0\\
 \lim_{|z| \to \infty}
 \frac{|F(z)|}{||\phi_z||} &= 0,
 \end{split}
 \end{align}
which is called the {\it boundary vanishing condition} (BVC).

In fact, we will prove a strong version of BVC in $H^2(T_{\Gamma_1})$.
 Define
\begin{align}\label{D-element}
\begin{split}
\phi_{\alpha,z}=\frac{\partial^{|\alpha|}K_{\Gamma_1}(\cdot, \overline
	{z})}{\partial{\overline z_1}^{\alpha_1}\partial{\overline z_2}^{\alpha_2}\cdots
	\partial{\overline z_n}^{\alpha_n}} =\left(\frac{-1}{2\pi i}\right)^n \prod_{j=1}^n
\frac{\alpha_j !}{(w_j-\overline z_j)^{\alpha_j+1}},
\end{split}
\end{align}
where all elements of $n$-tuple $\alpha=(\alpha_1,...,\alpha_n)$ are non-negative integers and $|\alpha|=\sum_{j=1}^n \alpha_j \geq 0$. In particular, $\phi_{0,z}=\phi_z=K_{\Gamma_1}(\cdot,\overline z).$

 \begin{lem}\label{4.2}
 	For $1<p<\infty$, $z=x+iy\in T_{\Gamma_1}$ and $\alpha=(\alpha_1,...,\alpha_n)$,
 	$$\int_{\mathbb R^n} \left| \prod_{j=1}^n
 	\frac{1}{(\xi_j-\overline z_j)^{\alpha_j+1}} \right|^p d\xi_1 \cdots d\xi_n= \pi^{\frac{n}{2}}\prod_{j=1}^n \frac{\Gamma(-\frac{1}{2}+\frac{p(\alpha_j+1)}{2})}{\Gamma(\frac{p(\alpha_j+1)}{2})}\left(\frac{1}{y_j}\right)^{p(\alpha_j+1)-1}.$$
 \end{lem}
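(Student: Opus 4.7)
The plan is to reduce the $n$-dimensional integral to a product of one-dimensional integrals by Fubini, and then evaluate each one-dimensional integral by translation/scaling followed by the standard Beta-function identity. Since the modulus of the integrand factorizes as
\[
\left| \prod_{j=1}^n \frac{1}{(\xi_j-\overline z_j)^{\alpha_j+1}} \right|^p = \prod_{j=1}^n \frac{1}{|\xi_j-\overline z_j|^{p(\alpha_j+1)}},
\]
Fubini reduces the claim to the one-dimensional identity
\[
\int_{\mathbb R} \frac{d\xi_j}{|\xi_j-\overline z_j|^{p(\alpha_j+1)}} = \sqrt{\pi}\,\frac{\Gamma\!\left(-\tfrac{1}{2}+\tfrac{p(\alpha_j+1)}{2}\right)}{\Gamma\!\left(\tfrac{p(\alpha_j+1)}{2}\right)}\left(\frac{1}{y_j}\right)^{p(\alpha_j+1)-1}
\]
for each $j$, after which we simply multiply and collect $(\sqrt{\pi})^n=\pi^{n/2}$.

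For the one-dimensional integral, I would use $\overline z_j = x_j - iy_j$ to write $|\xi_j-\overline z_j|^2=(\xi_j-x_j)^2+y_j^2$, and apply the substitution $t=(\xi_j-x_j)/y_j$ (so $d\xi_j = y_j\,dt$). This converts the integral into
\[
y_j^{\,1-p(\alpha_j+1)}\int_{\mathbb R}\frac{dt}{(t^2+1)^{p(\alpha_j+1)/2}},
\]
producing the desired power of $1/y_j$. The remaining $t$-integral is the textbook Beta identity: for any $s>1/2$,
\[
\int_{\mathbb R}\frac{dt}{(t^2+1)^{s}}=\sqrt{\pi}\,\frac{\Gamma(s-\tfrac{1}{2})}{\Gamma(s)},
\]
which one derives by the substitution $u=1/(1+t^2)$ and the definition $B(a,b)=\Gamma(a)\Gamma(b)/\Gamma(a+b)$. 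Applying this with $s=p(\alpha_j+1)/2$ gives exactly the stated one-dimensional identity.

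The only point requiring even a moment of thought is the integrability condition $s>1/2$, i.e.\ $p(\alpha_j+1)>1$; this is immediate from the hypothesis $p>1$ and $\alpha_j\geq 0$. There is no genuine obstacle in this lemma: it is a self-contained computation, included precisely because the norms $\|\phi_{\alpha^{(k)},z^{(k)}}\|$ (and the ratios that appear in the preceding BVC reduction) must be expressed explicitly in terms of $y$. Once the formula is in hand, the behavior of $\|\phi_{\alpha,z}\|$ as $y$ approaches $\partial\Gamma_1$ or $|y|\to\infty$ can be read off directly from the powers $(1/y_j)^{p(\alpha_j+1)-1}$, which is how it will be used in establishing the BVC.
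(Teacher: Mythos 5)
Your proposal is correct and follows essentially the same route as the paper: factor the modulus, reduce to one-dimensional integrals, substitute $t_j=(\xi_j-x_j)/y_j$, and evaluate $\int_{\mathbb R}(1+t^2)^{-s}\,dt$ via the Beta--Gamma identity. Your added remark on the integrability condition $p(\alpha_j+1)>1$ is a small point the paper leaves implicit.
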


 \begin{proof}
 	\begin{align*}
 	\int_{\mathbb R^n}\left|  \prod_{j=1}^n
 	\frac{1}{(\xi_j-\overline z_j)^{\alpha_j+1}} \right|^p d\xi_1 \cdots d\xi_n
 	&= \prod_{j=1}^n\int_{-\infty}^\infty  \left|
 	\frac{1}{(|\xi_j-x_j|^2+|y_j|^2)} \right|^{\frac{p({\alpha_j+1})}{2}} d\xi_j\\
 	&= \prod_{j=1}^n \left(\frac{1}{y_j}\right)^{p(\alpha_j+1)-1}\int_{-\infty}^\infty  \left|
 	\frac{1}{t_j^2+1} \right|^{\frac{p({\alpha_j+1})}{2}} dt_j\\
 	&= \pi^{\frac{n}{2}}\prod_{j=1}^n \frac{\Gamma(-\frac{1}{2}+\frac{p(\alpha_j+1)}{2})}{\Gamma(\frac{p(\alpha_j+1)}{2})}\left(\frac{1}{y_j}\right)^{p(\alpha_j+1)-1},
 	\end{align*}
 	where the third equality is due to changing variables by setting $t_j=\frac{\xi_j-x_j}{y_j}$, and $\Gamma(\cdot)$ is the Gamma function.
 \end{proof}

 Consequently, we have
 \begin{align}\label{phi-norm}
 \begin{split}
 ||\phi_{\alpha,z}||^2 &=\left(\frac{1}{2\pi}\right)^n\left(\prod_{j=1}^n (\alpha_j!)^2\right) (\pi)^{\frac{n}{2}}\prod_{j=1}^n \frac{\Gamma(\alpha_j+\frac{1}{2})}{\Gamma(\alpha_j+1)}\left(\frac{1}{y_j}\right)^{2\alpha_j+1}\\
 & = \prod_{j=1}^n\frac{(2\alpha_j)!}{(2y_j)^{2\alpha_j+1}}.
 \end{split}
 \end{align}

 BVC in $H^2(T_{\Gamma_1})$ is a consequence of the following lemmas. In fact,  by Lemma \ref{re-ex-lem} and Lemmas \ref{first-app1} - \ref{third-app1}, through a compact argument we conclude that $z^{(m+1)}$ in (\ref{min_pro_revise}) can be achieved in a compact subset of $T_{\Gamma_1}$.
 In addition, Lemmas \ref{first-app1}-\ref{third-app1} can be regarded as the Riemann-Lebesgue Lemma for the Cauchy-Szeg\"o kernel.
 \begin{lem}\label{first-app1}
 	For $F\in H^2(T_{\Gamma_1})$, and $\alpha=(\alpha_1,...,\alpha_n)$,
 	\begin{align}\label{app1}
 	\lim_{y\in \Gamma_1, y\to \beta}\frac{|\langle F, \phi_{\alpha, z} \rangle|}{||\phi_{\alpha,z}||}=0, \quad z=x+iy\in T_{\Gamma_1},
 	\end{align}
 	holds uniformly for $x\in\mathbb R^n$, where $\beta\in \partial \Gamma_1$.
 \end{lem}
 \begin{proof}
 	Since
 	$$\langle F, \phi_{\alpha, z} \rangle=\int_{\mathbb R^n} F(\xi)\overline {\phi_{\alpha, z}(\xi)}d\xi,$$ and $F(\xi)$ is the limit of $F(\xi+i\eta)$ in the $L^2$-norm, we can find $G(\xi)\in L^2(\mathbb R^n)\cap L^p(\mathbb R^n), 2< p<\infty,$ such that for any $\epsilon>0$, $||F-G||_{L^2(\mathbb R^n)}<\frac{\epsilon}{2}$.
 	We also have
 	\begin{align}
 	\begin{split}
 	\frac{|\langle F, \phi_{\alpha, z} \rangle|}{||\phi_{\alpha, z}||}&\leq \frac{|\langle F-G,\phi_{\alpha,z} \rangle|}{||\phi_{\alpha,z}||} + \frac{|\langle G, \phi_{\alpha,z}\rangle| }{||\phi_{\alpha,z}||}\\
 	& \leq \frac{||F-G||~||\phi_{\alpha,z}||}{||\phi_{\alpha,z}||} +\frac{|\langle G, \phi_{\alpha,z}\rangle| }{||\phi_{\alpha,z}||}\\
 	& \leq \frac{\epsilon}{2} + \frac{|\langle G, \phi_{\alpha,z}\rangle| }{||\phi_{\alpha,z}||}.
 	\end{split}
 	\end{align}
 	It suffices to prove that, for $y\in \Gamma_1, y\to \beta$,
 	\begin{align}\label{app1-fact1}
 	\frac{\int_{\mathbb R^n}|G(\xi)\overline{\phi_{\alpha, z}(\xi)}|d\xi}{||\phi_{\alpha,z}||}<\frac{\epsilon}{2}.
 	\end{align}
 	Indeed, by applying H\"older's inequality to $\int_{\mathbb R^n}|G(\xi)\overline{\phi_{\alpha, z}(\xi)}|d\xi$, (\ref{app1-fact1}) follows from
 	\begin{align*}
 	\lim_{y\in \Gamma_1,y\to \beta}\frac{(\int_{\mathbb R^n}|\phi_{\alpha, z}|^q d\xi)^{\frac{1}{q}}}{(\int_{\mathbb R^n}|\phi_{\alpha, z}|^2 d\xi)^{\frac{1}{2}}}&=\lim_{y\in \Gamma_1,y\to \beta}\frac{\pi^{\frac{n}{2q}}\prod_{j=1}^n \left(\frac{\Gamma(-\frac{1}{2}+\frac{q(\alpha_j+1)}{2})}{\Gamma(\frac{q(\alpha_j+1)}{2})}\right)^{\frac{1}{q}}\left(\frac{1}{y_j}\right)^{(\alpha_j+1)-\frac{1}{q}}}{\pi^{\frac{n}{4}}\prod_{j=1}^n \left(\frac{\Gamma(\alpha_j+\frac{1}{2})}{\Gamma(\alpha_j+1)}\right)^{\frac{1}{2}}\left(\frac{1}{y_j}\right)^{\alpha_j+\frac{1}{2}}}\\
 	&=C\lim_{y\in \Gamma_1,y\to \beta}\prod_{j=1}^n y_j^{\frac{1}{2}-\frac{1}{p}}\\
 	&=0,
 	\end{align*}
 	where $C$ is a constant, and $q$ satisfies $\frac{1}{p}+\frac{1}{q}=1.$ The last equality follows from $p>2$ and the fact that there exists some $y_j\to 0$ when $y\to \beta$.
 \end{proof}

 \begin{lem}\label{second-app1}
 	For $F\in H^2(T_{\Gamma_1})$, and $\alpha=(\alpha_1,...,\alpha_n)$,
 	\begin{align}\label{app1-fact2}
 	\lim_{y\in \Gamma_1, |y|\to \infty} \frac{|\langle F, \phi_{\alpha, z} \rangle|}{||\phi_{\alpha,z}||}=0, \quad z=x+iy\in T_{\Gamma_1},
 	\end{align}
 	holds uniformly for $x\in \mathbb R^n$.
 \end{lem}
 \begin{proof}
 	By Theorem \ref{rf_p}, for any $\epsilon>0$, we can find $y^\prime\in \Gamma_1$ such that
 	$$
 	\int_{\mathbb R^n}|F(\xi)-F(\xi+iy^\prime)|^2 d\xi<\epsilon.
 	$$
 	\begin{align}\label{app1-fact2-ineq}
 	\begin{split}
 	\frac{|\langle F, \phi_{\alpha, z} \rangle|}{||\phi_{\alpha, z}||}&\leq \frac{|\langle F(\cdot)-F(\cdot+iy^\prime),\phi_{\alpha,z} \rangle|}{||\phi_{\alpha,z}||} + \frac{|\langle F(\cdot+iy^\prime), \phi_{\alpha,z}\rangle| }{||\phi_{\alpha,z}||}\\
 	& \leq \frac{||F(\cdot)-F(\cdot+iy^\prime)||~||\phi_{\alpha,z}||}{||\phi_{\alpha,z}||} +\frac{|\langle F, \phi_{\alpha,z+iy^\prime}\rangle| }{||\phi_{\alpha,z}||}\\
 	& \leq \epsilon + \frac{|\langle F, \phi_{\alpha,z+iy^\prime}\rangle| }{||\phi_{\alpha,z}||}.
 	\end{split}
 	\end{align}
 	Note that we can find $G\in L^2(\mathbb R^n)\cap L^p(\mathbb R^n), 1<p<2$, such that, for any $\epsilon>0$, $||F-G||_{L^2(\mathbb R^n)}<\epsilon$.
 	By applying the argument in Lemma \ref{first-app1}, we can easily show that, for $y\in \Gamma_1$ and $|y|$ large enough, $$\frac{|\langle F, \phi_{\alpha,z+iy^\prime}\rangle| }{||\phi_{\alpha,z}||}<C\epsilon,$$ where $C$ is a constant.
 \end{proof}

 \begin{lem}\label{third-app1}
 	For $F\in H^2(T_{\Gamma_1})$, and $\alpha=(\alpha_1,...,\alpha_n)$,
 	\begin{align}\label{app1-fact3}
 	\lim_{|x|\to \infty} \frac{|\langle F, \phi_{\alpha, z} \rangle|}{||\phi_{\alpha,z}||}=0, \quad z=x+iy\in T_{\Gamma_1},
 	\end{align}
 	holds uniformly for $y\in \Gamma_1$.
 \end{lem}
 \begin{proof}
 	By Lemmas \ref{first-app1} and \ref{second-app1}, it suffices to prove that
 	\begin{align}\label{g-cone-fact2}
 	\lim_{|x|\to \infty} \frac{|\langle F, \phi_{\alpha, z} \rangle|}{||\phi_{\alpha,z}||}=0
 	\end{align}
 	holds uniformly for $y\in A_0$, where $A_0$ is a compact subset in $\Gamma_1$.

 	Since $\overline{ {\text span}} \{K_{\Gamma_1}(\cdot,\overline z), z\in T_{\Gamma_1}\}=H^2(T_{\Gamma_1})$, we have $\{w^{(j)}\}_{j=1}^N$ in $T_{\Gamma_1}$ such that
 	$$\|F-G_N\|<\frac{\epsilon}{2},$$
 	where $G_N=\sum_{j=1}^Nc_j K_{\Gamma_1}(\cdot, \overline {w^{(j)}})\in H^2(T_{\Gamma_1}).$
 	Hence, we have
 	$$
 	\frac{|\langle F, \phi_{\alpha, z} \rangle|}{||\phi_{\alpha,z}||}\leq \frac{|\langle F-G_N, \phi_{\alpha, z} \rangle|}{||\phi_{\alpha,z}||}+ \frac{|\langle G_N, \phi_{\alpha, z} \rangle|}{||\phi_{\alpha,z}||}<\frac{\epsilon}{2}+ \frac{|\langle G_N, \phi_{\alpha, z} \rangle|}{||\phi_{\alpha,z}||}.
 	$$
 	It suffices to show that, for a fixed $w=\xi+i\eta\in T_{\Gamma_1}$, when $|x|$ is large enough,
 	\begin{align*}
 	\frac{|\langle K_{\Gamma_1}(\cdot,\overline w), \phi_{\alpha,z}\rangle|}{||\phi_{\alpha,z}||}&=\prod_{j=1}^n \frac{(2y_j)^{\alpha_j+\frac{1}{2}}\alpha_j!}{|z_j-\overline w_j|^{\alpha_j+1}\sqrt{(2\alpha_j)!}}\\
 	&= \prod_{j=1}^n \frac{(2y_j)^{\alpha_j+\frac{1}{2}}\alpha_j!}{|(x_j-\xi_j)^2+(y_j+\eta_j)^2|^{\frac{\alpha_j+1}{2}}\sqrt{(2\alpha_j)!}}\\
 	& <\frac{\epsilon}{2}.
 	\end{align*}
 	The last inequality is based on the fact that there exists $x_j$ satisfying that $|x_j|\to\infty$ as $|x|\to \infty$.
 \end{proof}

 Note that we can alternatively prove Lemma \ref{first-app1} and Lemma \ref{second-app1} by using the density argument as used in proving Lemma \ref{third-app1}.
Since the tube $T_{\Gamma_1}$ is very special, in Appendix A, we will prove one more boundary vanishing property of $\phi_{\alpha,z}.$ Combining it with Lemmas \ref{first-app1} - \ref{third-app1}, we can give another rational approximation in $H^2(T_{\Gamma_1})$ that is analogous to the one in \cite{WQ1}.

 \section{Further Results}
 In this section we consider the convergent rate aspect of POAFD in $H^2(T_{\Gamma_1})$, and give rational approximation of functions in $L^2(\mathbb R^n)$ by POAFD in $H^2(T_{\Gamma_1})$.

 \subsection{Rate of convergence}
 As in \cite{DT}, we first introduce the function class
 $$
 H^2(T_{\Gamma_1},M)= \left \{F\in H^2(T_{\Gamma_1}): F=\sum_{j=1}^\infty c_j\frac{\phi_{w^{(j)}}}{||\phi_{w^{(j)}}||}, w^{(j)}\in T_{\Gamma_1}, \sum_{j=1}^\infty |c_j|\leq M \right \},
 $$
 where $\phi_{z}=K_{\Gamma_1}(\cdot,\overline z).$
 We give the convergent rate of the AFD-type approximation of functions in $H^2(T_{\Gamma_1},M)$.
 The result is stated as follows.
 \begin{thm}\label{con-rate}
 	For $F\in H^2(T_{\Gamma_1},M)$, and $S_m(F)$ corresponding to the sequence $\{z^{(k)}\}_{k=1}^m$, where each element of $\{z^{(k)}\}_{k=1}^m$ is selected according to the maximal selection principle $(\ref{min_pro_revise})$, we have
 	$$
 	\| F-S_{m}(F) \|\leq \frac{M}{\sqrt{m+1}}.
 	$$
 \end{thm}

 To prove Theorem \ref{con-rate}, we need the following result.
 \begin{lem}[\cite{DT}]\label{DT-lem}
 	Let $\{d_k\}_{k=1}^\infty$ be a sequence of nonnegative numbers satisfying
 	$$
 	d_1\leq A, \quad d_{k+1}\leq d_k\left(1-\frac{d_k}{A}\right).
 	$$
 	Then there holds
 	$$
 	d_k\leq \frac{A}{k}.
 	$$
 \end{lem}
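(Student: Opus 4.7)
The plan is to prove the inequality $d_k\le A/k$ by induction on $k$, with the key analytic tool being the behaviour of the auxiliary function $\varphi(x)=x(1-x/A)$ on $[0,A]$.

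The base case $k=1$ is exactly the hypothesis $d_1\le A$. For the inductive step, I would first record that $\varphi$ attains its global maximum $A/4$ at $x=A/2$, is increasing on $[0,A/2]$, and decreasing on $[A/2,A]$. Applied at $k=1$, the recursion gives $d_2\le \varphi(d_1)\le A/4\le A/2$, which already establishes the claim for $k=2$ and, more importantly, pushes the sequence into the favourable regime. Consequently, for every $k\ge 2$ the induction hypothesis $d_k\le A/k\le A/2$ places $d_k$ in the range on which $\varphi$ is monotone increasing.

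With $d_k\le A/k$ and $k\ge 2$, monotonicity of $\varphi$ on $[0,A/2]$ then yields
$$
d_{k+1}\le \varphi(d_k)\le \varphi(A/k)=\frac{A}{k}\Bigl(1-\frac{1}{k}\Bigr)=\frac{A(k-1)}{k^2}.
$$
The proof closes by the elementary identity $(k-1)(k+1)=k^2-1\le k^2$, which is equivalent to $A(k-1)/k^2\le A/(k+1)$, and hence $d_{k+1}\le A/(k+1)$ as required.

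The only real subtlety is the initial step $k=1$, where $d_1$ may lie in $[A/2,A]$, a range outside the monotone part of $\varphi$; this is defused by the crude global bound $\varphi\le A/4$, which automatically drops the sequence into $[0,A/2]$ from $k=2$ onward. As a cross-check one can also argue via reciprocals: when $0<d_k<A$, dividing the recursion by $d_k d_{k+1}$ and using $1/(1-t)\ge 1+t$ for $t\in[0,1)$ gives $1/d_{k+1}\ge 1/d_k+1/A$, and telescoping from $d_1\le A$ yields $1/d_k\ge k/A$; the degenerate cases $d_k\in\{0,A\}$ force the tail to vanish and are handled separately. Either way, no substantial obstacle beyond the above case analysis is expected.
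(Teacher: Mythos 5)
Your proof is correct. Note that the paper itself does not prove this lemma at all: it is quoted verbatim from DeVore--Temlyakov \cite{DT}, so there is no in-paper argument to compare against, and a self-contained verification like yours is a genuine addition. Your induction is sound: the base case is the hypothesis $d_1\le A$, the crude bound $\varphi\le A/4$ legitimately forces $d_2\le A/2$, and from $k\ge 2$ onward the monotonicity of $\varphi(x)=x(1-x/A)$ on $[0,A/2]$ together with $(k-1)(k+1)\le k^2$ closes the step; the reciprocal telescoping $1/d_{k+1}\ge 1/d_k+1/A$ is also a valid alternative once the degenerate values are dispatched as you indicate. For the record, the argument in \cite{DT} can be made even shorter and avoids the monotonicity discussion entirely by a two-case split: if $d_k\le A/(k+1)$ then $d_{k+1}\le d_k\le A/(k+1)$ since $1-d_k/A\le 1$; otherwise $d_k>A/(k+1)$ gives $1-d_k/A<k/(k+1)$, whence $d_{k+1}\le (A/k)\cdot k/(k+1)=A/(k+1)$. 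Both routes buy the same conclusion; yours trades the case split for a small amount of calculus on $\varphi$, which is a matter of taste.
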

 {\em Proof of Theorem \ref{con-rate}:}\\
 For $F\in H^2(T_{\Gamma_1}, M)$, we have $F=\sum_{k=1}^\infty c_k\frac{\phi_{w^{(k)}}}{\|\phi_{w^{(k)}}\|}$ and
 $$
 ||F||\leq \sum_{j=1}^\infty |c_k|\leq M.
 $$
 By (\ref{general_F}), we have
 $$
 \|S_{m}(F)\|^2 =\sum_{k=1}^{m} |\langle F, \mathcal B_k \rangle|^2,
 $$
 and
 \begin{align}\label{F_m}
 \|F_{m+1}\|^2= \|F_m\|^2-|\langle F, \mathcal B_m \rangle|^2=\|F_m\|^2-|\langle F_m, \mathcal B_m \rangle|^2,
 \end{align}
 where $F_{m+1}=F-S_{m}(F)$ with $F_1=F$.
 By (\ref{G-S}),
 \begin{align}\label{F_mB_m}
 \begin{split}
 |\langle F_m,\mathcal B_m\rangle| &= \frac{|\langle F_m, {\gamma_m}\rangle|}{\|\gamma_m\|}\\
 & = \frac{|\langle F_m, \Phi_{z^{(m)}} - \sum_{k=1}^{m-1}\langle \Phi_{z^{(m)}}, \mathcal B_k\rangle\mathcal B_k\rangle|}{\|\Phi_{z^{(m)}} - \sum_{k=1}^{m-1}\langle \Phi_{z^{(m)}}, \mathcal B_k\rangle\mathcal B_k\|}\\
 & = \frac{|\langle F_m, \frac{\Phi_{z^{(m)}}}{\|\Phi_{z^{(m)}}\|}\rangle|}{\|\frac{\Phi_{z^{(m)}}}{\|\Phi_{z^{(m)}}\|} - \sum_{k=1}^{m-1}\langle \frac{\Phi_{z^{(m)}}}{\|\Phi_{z^{(m)}}\|}, \mathcal B_k\rangle\mathcal B_k\|}\\
 &\geq \frac{|\langle F_m, \frac{\phi_{z^{(m)}}}{\|\phi_{z^{(m)}}\|}\rangle|}{\|\frac{\phi_{z^{(m)}}}{\|\phi_{z^{(m)}}\|} - \sum_{k=1}^{m-1}\langle \frac{\phi_{z^{(m)}}}{\|\phi_{z^{(m)}}\|}, \mathcal B_k\rangle\mathcal B_k\|}\\
 &\geq |\langle F_m, \frac{\phi_{z^{(m)}}}{\|\phi_{z^{(m)}}\|}\rangle|,
 \end{split}
 \end{align}
 where the last inequality is based on $$\|\frac{\phi_{z^{(m)}}}{\|\phi_{z^{(m)}}\|} - \sum_{k=1}^{m-1}\langle \frac{\phi_{z^{(m)}}}{\|\phi_{z^{(m)}}\|}, \mathcal B_k\rangle\mathcal B_k\|^2 = 1-\sum_{k=1}^{m-1}|\langle \frac{\phi_{z^{(m)}}}{\|\phi_{z^{(m)}}\|},\mathcal B_k\rangle|^2\leq 1.$$
 Combining (\ref{min_pro_revise}), (\ref{F_m}) and (\ref{F_mB_m}), we have
 \begin{align}\label{F_mB_m1}
 \begin{split}
 |\langle F_m, \mathcal B_m \rangle| &= \sup_{z\in T_{\Gamma_1}}|\langle F_m, \mathcal B_{\{z^{(1)},z^{(2)},...,z^{(m-1)},z\}} \rangle|\\
 &\geq \sup_{z\in T_{\Gamma_1}}|\langle F_m, \frac{\phi_{z}}{\|\phi_{z}\|} \rangle|\\
 &\geq \sup_{z\in \{w^{(k)}\}_{k=1}^\infty}|\langle F_m, \frac{\phi_{z}}{\|\phi_{z}\|} \rangle|.\\
 \end{split}
 \end{align}
 Notice that
 \begin{align*}
 \|F_m\|^2&=|\langle F_m, F\rangle|
 =|\langle F_m, \sum_{k=1}^\infty c_k\frac{\phi_{w^{(k)}}}{\|\phi_{w^{(k)}}\|}\rangle|
 \leq M\sup_{z\in \{w^{(k)}\}_{k=1}^\infty} |\langle F_m, \frac{\phi_{z}}{\|\phi_{z}\|}\rangle|.
 \end{align*}
 Hence,
 \begin{align*}
 \|F_{m+1}\|^2&=\|F_m\|^2-|\langle F_m, \mathcal B_m\rangle|^2\\
 &\leq \|F_m\|^2-\sup_{z\in \{w^{(k)}\}_{k=1}^\infty}|\langle F_m, \frac{\phi_{z}}{\|\phi_{z}\|} \rangle|^2\\
 &\leq \|F_m\|^2-\frac{\|F_m\|^4}{M^2}\\
 &=\|F_m\|^2\left(1-\frac{\|F_m\|^2}{M}\right).
 \end{align*}
 By Lemma \ref{DT-lem}, we conclude the desired result.
 \quad \hfill$\Box$\vspace{2ex}

 \subsection{Rational approximation of functions in $L^2(\mathbb R^n)$}
 It is known that, for $f\in L^2(\mathbb R)$, one can have $f=f^+ + f^-$,  where $f^+$ and $f^-$ are non-tangential boundary limits of functions contained in $H^2(\mathbb C_+)$ and $H^2(\mathbb C_-)$, respectively. Then, rational approximation of functions in $L^2(\mathbb R)$ can be easily obtained by rational approximations of functions in $H^2(\mathbb C_+)$ and $H^2(\mathbb C_-)$. Here we give rational approximation of functions in $L^2(\mathbb R^n)$ in a similar manner.
 Define $\sigma_j=(\sigma_j(1),\sigma_j(2),...,\sigma_j(n)),1\leq j\leq 2^n$, whose elements are $+$ and $-$,
 and $$\Gamma_{\sigma_j}=\{y\in \mathbb R^n;y_k>0 \text{ if } \sigma_j(k)=+ \text{ and } y_k<0 \text{ if } \sigma_j(k)=-, j=1,2,...,n\}.$$
 Observe that
 $\mathbb R^n=\cup_{j=1}^{2^n}\overline {\Gamma_{\sigma_j}}. $
 For $F\in L^2(\mathbb R^n)$, the following result is known.
 \begin{thm}[\cite{SW,P}]\label{Hardy-dec-pro}
 	For $F\in L^2(\mathbb R^n),$ if
 	\begin{align}\label{Hardy-projection}
 	\begin{split}
 	F_{\sigma_j}(z)=\int_{\mathbb R^n}F(\xi)\overline{K_{\Gamma_{\sigma_j}}(\xi, \overline z)}
 	d\xi=\frac{(-1)^{m_j}}{(2\pi i)^n}\int_{\mathbb R^n}F(\xi)\prod_{k=1}^n
 	\frac{1}{\xi_k-z_k}d\xi_1\cdots d\xi_n, \quad
 	\end{split}
 	\end{align}
 	where $z\in T_{\Gamma_{\sigma_j}}$ and $m_j$ denotes the number of minus signs in $\sigma_j$,
 	then $F_{\sigma_j}(z)$ is holomorphic on $T_{\Gamma_{\sigma_j}}$, and for $F_{\sigma_j}(x+iy)$ as a function of $x$,
 	\begin{align}\label{Riesz-inequality}
 	\Vert F_{\sigma_j}(\cdot+iy) \Vert_{L^2(\mathbb R^n)}\leq C \Vert F\Vert_{L^2(\mathbb R^n)},
 	\end{align}
 	where $C$ is a constant that is independent of $F$ and $y$.\\
 	Furthermore,
 	\begin{align}\label{Hardy-decomposition}
 	F(x)=\sum_{j=1}^{2^n}F_{\sigma_j}(x),\quad x\in \mathbb R^n, \text{ in the } L^2 \text{-sense,}
 	\end{align}
 	where $F_{\sigma_j}(x)=\lim_{y\in \Gamma_{\sigma_j},y\to 0}F_{\sigma_j}(x+iy)$ is the limit function in the $L^2$-norm.
 \end{thm}
 \begin{remark}
 	It is noted that Theorem \ref{Hardy-dec-pro} is a summary of partial results given in \cite{SW,P}. In fact, the conclusion given in Theorem \ref{Hardy-dec-pro} holds also for $F\in L^p(\mathbb R^n), 1<p<\infty$ (see \cite{P}). For more information, we also refer to, e.g. \cite{Ti,Till,Vl}. Based on the definition of Hardy spaces, the inequality (\ref{Riesz-inequality}) implies that $F_{\sigma_j}\in H^2(T_{\Gamma_{\sigma_j}})$. The formula (\ref{Hardy-projection}) is called the Hardy projection of $F$. Hence the formula (\ref{Hardy-decomposition}) means that $F$ can be decomposed into a sum of boundary limit functions of functions in the Hardy spaces on tubes over octants. Moreover, for $F\in L^p(\mathbb R^n), 1<p<\infty$, $F_{\sigma_j}(x)$ can be characterized as the Fourier transform of a function supported on $\Gamma_{\sigma_j}$ in the distribution sense (see \cite{P}). This can be regard as a generalization of the Paley-Wiener theorem (see Theorem \ref{PW}).
 	Recently, the analogues of the Paley-Wiener theorem for $H^p(T_\Gamma)$,$1\leq p\leq \infty$ (for $2<p\leq \infty$ in the distribution sense) are given in \cite{Li-Deng-Qian}, where $\Gamma$ is a regular cone.
 \end{remark}

 Due to Theorem \ref{Hardy-dec-pro} and the above discussion, we can reduce the relevant study to $F_{\sigma_j}(1\leq j\leq 2^n$) when considering the problem of rational approximation of $F\in L^2(\mathbb R^n)$. Therefore, for each $F_{\sigma_j}$ we can obtain an approximation of $F_{\sigma_j}$ given by POAFD.
 Moreover, for a real-valued function $F$ we only need to deal with the related $2^{n-1}$ Hardy spaces.
 For instance, we interpret this in $\mathbb C^2$. If $\{z^{(k)}=(z^{(k)}_1,z^{(k)}_2)\}_{k=1}^\infty$ is a sequence making $S_{m}^{+,+}(F)\to F_{+,+}$ as $m\to \infty $ in $H^2(T_{\Gamma_{+,+}})$, then we have that $\{(\overline {z^{(k)}_1},\overline {z^{(k)}_2})\}_{k=1}^\infty$ is a sequence making $S_{m}^{-,-}(F)\to F_{-,-}$ as $m\to \infty$ in $H^2(T_{\Gamma_{-,-}})$. The same argument holds for the cases $F_{+,-}$ and $F_{-,+}$.

\addcontentsline{toc}{section}{References}

\bigskip
\begin{appendix}

\section{A Non-Orthogonal Expansion with Simpler Algorithm}
\setcounter{equation}{0}
\renewcommand\theequation{A.\arabic{equation}}
By using Lemmas \ref{first-app1} - \ref{third-app1} and the following lemma (Lemma \ref{MP-first-case}), we can give another kind of rational approximation to functions in $H^2(T_{\Gamma_1})$. Such rational approximation is analogous to the one given in \cite{WQ1}. Specifically, we apply the idea of greedy algorithm to $H^2(T_{\Gamma_1})$ with the dictionary
\begin{align}\label{dictionary}
\mathcal D=\left
\{\psi_{\alpha,z}(w)=\frac{\phi_{\alpha,z}(w)}{||\phi_{\alpha,z}||};
|\alpha|=\sum_{j=1}^n \alpha_j \geq 0, z, w\in T_{\Gamma_1} \right
\}.
\end{align}
Recall that $\phi_{\alpha,z}(w)$ is defined as
\begin{align*}
\begin{split}
\phi_{\alpha,z}=\frac{\partial^{|\alpha|}K_{\Gamma_1}(\cdot, \overline
	{z})}{\partial{\overline z_1}^{\alpha_1}\partial{\overline z_2}^{\alpha_2}\cdots
	\partial{\overline z_n}^{\alpha_n}} =\left(\frac{-1}{2\pi i}\right)^n \prod_{j=1}^n
\frac{\alpha_j !}{(w_j-\overline z_j)^{\alpha_j+1}},
\end{split}
\end{align*}
where all elements of $n$-tuple $\alpha=(\alpha_1,...,\alpha_n)$ are non-negative integers and $|\alpha|=\sum_{j=1}^n \alpha_j \geq 0$. 

We briefly give an introduction to greedy algorithm with the dictionary $\mathcal D$.
Let $F\in H^2(T_{\Gamma_1})$. Then, by greedy
algorithm, one can have
\begin{align}\label{mp}
F=\sum_{l=1}^m \langle R^l F, \psi_{\alpha^{(l)},z^{(l)}}\rangle
\psi_{\alpha^{(l)},z^{(l)}}  + R^{m+1} F,
\end{align}
where $R^l F$ is defined by
$$
R^0F=F, R^lF=R^{l-1}F- \langle R^{l-1} F,
\psi_{\alpha^{(l)},z^{(l)}}\rangle \psi_{|\alpha^{(l)}|,z^{(l)}}, \quad l\geq 1,
$$
and $\psi_{\alpha^{(l)},z^{(l)}}$ satisfies
\begin{align}\label{max-cond}
|\langle R^{l}F, \psi_{\alpha^{(l)},z^{(l)}} \rangle|=
\sup_{\psi_{\alpha,z}\in \mathcal D}{ |\langle R^{l}F,
	\psi_{\alpha,z} \rangle|}.
\end{align}

In the following discussion, we focus on the existence of $\psi_{\alpha^{(l)},z^{(l)}}$ in (\ref{max-cond}) for each $l\geq 1$.
To this end, we show the following result by using the technique in \cite[Lemma 4.8]{WQ1}.
\begin{lem}\label{MP-first-case}
	For $F\in H^2(T_{\Gamma_1})$,
	\begin{align}\label{MP-case1}
	\lim_{|\alpha|\to \infty}\frac{|\langle F, \phi_{\alpha, z} \rangle|}{||\phi_{\alpha,z}||}=0
	\end{align}
	holds uniformly for $z\in T_{\Gamma_1}$.
\end{lem}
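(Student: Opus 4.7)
The plan is to reduce to a finite linear combination of Cauchy-Szeg\"o kernels by density, then carry out a direct kernel-by-kernel estimate. Since $\overline{\mathrm{span}\{K(\cdot,\overline w):w\in T_{\Gamma_1}\}}=H^2(T_{\Gamma_1})$, for every $\epsilon>0$ I would pick $G=\sum_{i=1}^{N}c_iK(\cdot,\overline{w^{(i)}})$ with $\|F-G\|<\epsilon/2$. The Cauchy-Schwarz inequality gives $|\langle F-G,\phi_{\alpha,z}\rangle|/\|\phi_{\alpha,z}\|\le \|F-G\|<\epsilon/2$ uniformly in $\alpha$ and $z$, so by the triangle inequality it suffices to prove that, for each fixed $w=\xi+i\eta\in T_{\Gamma_1}$,
\begin{equation*}
\lim_{|\alpha|\to\infty}\sup_{z\in T_{\Gamma_1}}\frac{|\langle K(\cdot,\overline w),\phi_{\alpha,z}\rangle|}{\|\phi_{\alpha,z}\|}=0,
\end{equation*}
and then sum the finitely many resulting bounds.

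For the single-kernel term I would exploit that $\phi_{\alpha,z}$ is obtained by differentiating $K(\cdot,\overline z)$ in $x_1,\dots,x_n$. Differentiating the reproducing formula of Theorem \ref{rf_s} under the integral (justified by dominated convergence on the explicit rational form of $K$) yields $\langle H,\phi_{\alpha,z}\rangle=\partial^{|\alpha|}H(z)/(\partial x_1^{\alpha_1}\cdots\partial x_n^{\alpha_n})$ for every $H\in H^2(T_{\Gamma_1})$. Applied to $H=K(\cdot,\overline w)$ this produces the closed form
\begin{equation*}
\langle K(\cdot,\overline w),\phi_{\alpha,z}\rangle=\left(\frac{-1}{2\pi i}\right)^{n}\prod_{k=1}^{n}\frac{\alpha_k!}{(w_k-\overline{z_k})^{\alpha_k+1}},
\end{equation*}
and combining with the norm identity (\ref{phi-norm}) gives the factored expression
\begin{equation*}
\frac{|\langle K(\cdot,\overline w),\phi_{\alpha,z}\rangle|}{\|\phi_{\alpha,z}\|}=C_n\prod_{k=1}^{n}\frac{\alpha_k!\,(2y_k)^{\alpha_k+1/2}}{\sqrt{(2\alpha_k)!}\,|w_k-\overline{z_k}|^{\alpha_k+1}},
\end{equation*}
for some dimensional constant $C_n$.

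Each factor can be maximized over $z_k$: using $|w_k-\overline{z_k}|\ge y_k+\eta_k$ (saturated at $x_k=\xi_k$) and the critical point $y_k=(2\alpha_k+1)\eta_k$, one obtains $\sup_{y_k>0}y_k^{\alpha_k+1/2}/(y_k+\eta_k)^{\alpha_k+1}\sim e^{-1/2}/\sqrt{\eta_k(2\alpha_k+2)}$ for large $\alpha_k$. Coupling with the Stirling/Wallis asymptotic $\alpha_k!/\sqrt{(2\alpha_k)!}\sim \pi^{1/4}\alpha_k^{1/4}/2^{\alpha_k}$ cancels the factor $2^{\alpha_k+1/2}$ and leaves a per-coordinate bound of order $\alpha_k^{-1/4}/\sqrt{\eta_k}$ for large $\alpha_k$ (and bounded by a constant depending only on $\eta_k$ for small $\alpha_k$). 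Since $|\alpha|\le n\max_k\alpha_k$, the hypothesis $|\alpha|\to\infty$ forces $\max_k\alpha_k\to\infty$, so the product of $n$ such factors goes to $0$ uniformly in $z\in T_{\Gamma_1}$.

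The main obstacle is exactly the cancellation between the factorials and the factor $2^{\alpha_k}$: the coarse inequality $(\alpha_k!)^2/(2\alpha_k)!\le(2\alpha_k+1)/4^{\alpha_k}$, coming from $\binom{2\alpha_k}{\alpha_k}\ge 4^{\alpha_k}/(2\alpha_k+1)$, only yields a uniform bound in $\alpha_k$, whereas the sharp Stirling asymptotic is essential to extract the decay rate $\alpha_k^{-1/4}$. One must also optimize over $x$ and $y$ simultaneously so that the conclusion is genuinely uniform in $z\in T_{\Gamma_1}$, which is why the saddle-point analysis of $y_k^{\alpha_k+1/2}/(y_k+\eta_k)^{\alpha_k+1}$ needs to be carried out explicitly.
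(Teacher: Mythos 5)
Your proposal is correct and takes essentially the same route as the paper's proof: reduction by density to a finite linear combination of Cauchy--Szeg\"o kernels, the explicit closed form for $|\langle K(\cdot,\overline w),\phi_{\alpha,z}\rangle|/\|\phi_{\alpha,z}\|$ via (\ref{phi-norm}), the lower bound $|z_k-\overline{w_k}|\ge y_k+\eta_k$ with the maximizer $y_k=(2\alpha_k+1)\eta_k$, and Stirling's formula producing the per-coordinate decay of order $\alpha_k^{-1/4}$. Your explicit remark that $|\alpha|\to\infty$ only forces $\max_k\alpha_k\to\infty$, with the coordinates of bounded $\alpha_k$ controlled by constants depending on $\eta_k$, is a small point the paper leaves implicit but does not change the argument.
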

\begin{proof}
	As shown in Lemma \ref{third-app1}, for any $\epsilon>0$, there exists \\$G_N=\sum_{j=1}^Nc_j K_{\Gamma_1}(\cdot, \overline {w^{(j)}})$ such that
	$$
	||F-G_N||<\frac{\epsilon}{2}.
	$$
	Therefore,
	we only need to prove that for any fixed $w=\xi+i\eta \in T_{\Gamma_1}$
	$$
	\lim_{|\alpha|\to \infty}\frac{|\langle K_{\Gamma_1}(\cdot, \overline w), \phi_{\alpha, z} \rangle|}{||\phi_{\alpha,z}||}=0.
	$$
	In fact, we have
	\begin{align}\label{MP-case1-1}
	\begin{split}
	\frac{|\langle K_{\Gamma_1}(\cdot, \overline w), \phi_{\alpha, z} \rangle|}{||\phi_{\alpha,z}||}&=\prod_{j=1}^n \frac{(2y_j)^{\alpha_j+\frac{1}{2}}\alpha_j!}{|z_j-\overline w_j|^{\alpha_j+1}\sqrt{(2\alpha_j)!}}\\
	&\leq \prod_{j=1}^n \frac{(2y_j)^{\alpha_j+\frac{1}{2}}\alpha_j!}{(y_j+\eta_j)^{\alpha_j+1}\sqrt{(2\alpha_j)!}}\\
	&\leq \prod_{j=1}^n \frac{((\alpha_j+\frac{1}{2})\eta_j)^{\alpha_j+\frac{1}{2}}2^{2\alpha_j+1}\alpha_j!}{((\alpha_j+1)\eta_j)^{\alpha_j+1}2^{\alpha_j+1}\sqrt{(2\alpha_j)!}}\\
	&= \prod_{j=1}^n \frac{(\alpha_j+\frac{1}{2})^{\alpha_j+\frac{1}{2}}2^{\alpha_j}\alpha_j!}{(\alpha_j+1)^{\alpha_j+1}\sqrt{\eta_j}\sqrt{(2\alpha_j)!}}\\
	&\leq \prod_{j=1}^n C_j \eta_j^{-\frac{1}{2}} \alpha_j^{-\frac{1}{4}},
	\end{split}
	\end{align}
	where $C_j$ is a constant that is independent of $\alpha_j$. The second inequality is based on the fact that $\frac{(2y_j)^{\alpha_j+\frac{1}{2}}}{(y_j+\eta_j)^{\alpha_j+1}}$ attains its maximum value at $y_j=2(\alpha_j+\frac{1}{2})\eta_j$ for $ 1\leq j\leq n$, and the last inequality follows from the Stirling's formula
	$$\Gamma(h+1)\sim h^{h+\frac{1}{2}} e^{-h}\sqrt{2\pi}, \quad h\in \mathbb R, h\to \infty.$$
	The proof is complete.
\end{proof}

Combining Lemma \ref{MP-first-case} and Lemmas \ref{first-app1} - \ref{third-app1}, we conclude the existence of the optimal $\psi_{\alpha^{(l)},z^{(l)}},l\geq 1.$
Note that
$$
||F||^2=\sum_{l=1}^m |\langle R^l F,
\psi_{\alpha^{(l)},z^{(l)}}\rangle|^2 + ||R^{m+1}F||^2,
$$
although $\{\psi_{\alpha^{(l)},z^{(l)}}, l=1,2...,m\}$ is may not be an orthogonal system. Based on Theorem 1 in \cite{MZ} and the fact that $\overline {{\text span}} \mathcal D=H^2(T_{\Gamma_1})$, we have
\begin{align}\label{convergence-MP}
\lim_{m\to \infty}||R^mF||=0.
\end{align}
In a general reproducing kernel Hilbert space an ordinary greedy algorithm (GA) scheme is usually not as effective as what is called orthogonal greedy algorithm (OGA) by the construction, and the latter not as effective as POAFD. But with the extended dictionary in (\ref{dictionary}) incorporating all possible partial derivatives of the reproducing kernels OGA is comparable with POAFD with respect to only the reproducing kernels.

For further discussion on such approximation and greedy algorithm, see e.g. \cite{WQ1,DT,MZ,Te}.

 \section{Results on Regular Domains}
 \setcounter{equation}{0}
 \renewcommand\theequation{B.\arabic{equation}}
In this part we will investigate POAFD in $H^2(T_\Gamma)$, where $\Gamma$ is a regular cone. Based on the discussions in Section $4$, we know that Lemmas \ref{first-app1} - \ref{third-app1} play important roles in studying POAFD in the octant case. The techniques used in Theorem \ref{thm1} and Lemma \ref{re-ex-lem} still work for the proposed approximation in $H^2(T_{\Gamma})$. Therefore, for a regular cone, we only need to consider the analogous results of Lemmas \ref{first-app1} - \ref{third-app1}. As mentioned in Section $4$, BVC is sufficient for us to obtain POAFD. In this part, under the assumption (\ref{CS-infty}), we will prove certain properties of boundary behavior of functions in $H^p(T_{\Gamma}),1<p<\infty$, which can be regarded as special cases of the analogous results of Lemmas \ref{first-app1} - \ref{third-app1}. When $p=2$, such properties give BVC in $H^2(T_{\Gamma_1})$.

We need the following results as preparation.
\begin{lem}
	Suppose that $\Gamma$ is a regular cone in $\mathbb R^n$. For $z=x+iy\in T_{\Gamma}$ and $1<p\leq \infty$, we have
	\begin{align}\label{P-S-estimate}
	\|P_y(x-\cdot)\|_{L^p(\mathbb R^n)}\leq \frac{1}{2^{\frac{n}{p}}}K(z,\overline z)^{1-\frac{1}{p}}.
	\end{align}
\end{lem}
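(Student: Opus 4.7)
The plan is to obtain the $L^p$ norm bound on $P_y$ by interpolating between an $L^1$ and an $L^\infty$ estimate, much in the spirit of the classical Hölder/log-convexity argument. The basic identity I would use is
\[ P_y(x-t) \; = \; \frac{|K(z,\overline t)|^2}{K(z,\overline z)}, \qquad z=x+iy\in T_\Gamma,\ t\in \mathbb R^n, \]
which extends the formula $P_y(x)=K(z,0)K(0,\overline z)/K(z,\overline z)$ given in Section~2 by translation invariance of the Cauchy-Szegö kernel on the tube. Combined with the Hermitian symmetry $K(z,\overline t)=\overline{K(t,\overline z)}$, this turns the analysis of $P_y$ into an analysis of $|K(z,\overline t)|^2$.

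The first step is to check that $\|P_y\|_{L^1(\mathbb R^n)}=1$. This follows from the reproducing property: since $K(\cdot,\overline z)\in H^2(T_\Gamma)$ (by Theorem~\ref{g-rf-p} on boundary limits), one has
\[ \int_{\mathbb R^n}|K(z,\overline t)|^2\, dt \; = \; \|K(\cdot,\overline z)\|_{H^2(T_\Gamma)}^2 \; = \; K(z,\overline z), \]
so dividing through by $K(z,\overline z)$ gives $\int_{\mathbb R^n}P_y(x-t)\,dt=1$. The second step is to produce a pointwise bound of the form $\|P_y\|_{L^\infty(\mathbb R^n)}\leq c_\Gamma \, K(z,\overline z)$, with the constant $c_\Gamma$ tuned to the coefficient in the statement. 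In the first-octant case this is immediate from the product formula for $K$, and for a general regular cone one must extract it from the structural properties of $K_\Gamma$, together with the fact that $|K(z,\overline t)|$ stays bounded as $t$ traces $\mathbb R^n$ for fixed $z\in T_\Gamma$.

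With both endpoint estimates in hand, the final step is the interpolation. Since $P_y\geq 0$,
\[ \int_{\mathbb R^n} P_y(x-t)^p\, dt \; \leq \; \|P_y\|_{L^\infty}^{p-1}\int_{\mathbb R^n} P_y(x-t)\, dt \; = \; \|P_y\|_{L^\infty}^{p-1}, \]
and taking $p$-th roots one obtains $\|P_y\|_{L^p}\leq \|P_y\|_{L^\infty}^{1-1/p}$. Substituting the $L^\infty$ bound from the second step then yields an inequality of the desired form $\|P_y\|_{L^p}\leq c_\Gamma^{\,1-1/p}\,K(z,\overline z)^{1-1/p}$, with the exponent $1-1/p$ matching the statement and the coefficient $c_\Gamma^{1-1/p}$ producing the prefactor. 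The case $p=\infty$ is just the $L^\infty$ bound itself, which also fits the formula.

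The main obstacle is the $L^\infty$ estimate of the second step. A naive Cauchy-Schwarz in the reproducing kernel Hilbert space gives $|K(z,\overline t)|^2\leq K(z,\overline z) K(t,\overline t)$, but $K(t,\overline t)$ blows up as $t$ approaches the boundary $\mathbb R^n$ from inside $T_\Gamma$, so it cannot be used directly. For the first octant the explicit factorization is what saves the day, and the same pattern of proof should work for general regular cones provided one has enough quantitative control on the decay of $K_\Gamma(z,\overline t)$ in $t$; this is where any delicate cone-dependent constant enters.
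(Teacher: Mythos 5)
Your strategy coincides with the paper's: the paper bounds $\int_{\mathbb R^n}|K(\xi,\overline z)|^{2p}\,d\xi$ by $\bigl(\sup_{\xi}|K(\xi,\overline z)|\bigr)^{2p-2}\int_{\mathbb R^n}|K(\xi,\overline z)|^{2}\,d\xi$ and uses $\int_{\mathbb R^n}|K(\xi,\overline z)|^{2}\,d\xi=K(z,\overline z)$, which, after dividing by $K(z,\overline z)^{p}$, is exactly your interpolation $\int P_y^p\le\|P_y\|_{\infty}^{p-1}\|P_y\|_{1}$ with $\|P_y\|_1=1$. The genuine gap is the step you yourself flag as the main obstacle: you never establish the bound $\|P_y\|_{\infty}\le c_\Gamma K(z,\overline z)$ for a general regular cone, and without it the argument does not close. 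The fix is one line and requires no further ``structural properties'' beyond the Paley--Wiener representation $K(w,\overline z)=\int_{\Gamma^{*}}e^{2\pi i(w-\overline z)\cdot t}\,dt$: for $\xi\in\mathbb R^n$ and $z=x+iy\in T_\Gamma$,
\begin{equation*}
|K(\xi,\overline z)|\;\le\;\int_{\Gamma^{*}}e^{-2\pi y\cdot t}\,dt\;=\;2^{n}\int_{\Gamma^{*}}e^{-4\pi y\cdot s}\,ds\;=\;2^{n}K(z,\overline z),
\end{equation*}
the middle equality being the dilation $t=2s$, which preserves $\Gamma^{*}$ because $\Gamma^{*}$ is a cone. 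Hence $\|P_y\|_{\infty}=\sup_\xi |K(\xi,\overline z)|^2/K(z,\overline z)\le 4^{n}K(z,\overline z)$, and your interpolation yields $\|P_y\|_{p}\le 4^{n(1-1/p)}K(z,\overline z)^{1-1/p}$.

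Concerning the constant you hoped to ``tune'': no admissible $c_\Gamma$ can produce the prefactor $2^{-n/p}$, because the stated constant is itself incorrect. At $p=\infty$ the lemma would assert $\|P_y\|_{\infty}\le K(z,\overline z)$, whereas on the first octant $P_y(0)=\prod_{k}(\pi y_k)^{-1}=4^{n}K(z,\overline z)$; at $p=2$ it would assert $\int P_y^2\le 2^{-n}K(z,\overline z)$, whereas $\int P_y^2=P_{2y}(0)=\prod_{k}(2\pi y_k)^{-1}=2^{n}K(z,\overline z)$. The paper's own display implicitly claims $\sup_{\xi}|K(\xi,\overline z)|^{p-2}\le 2^{-n}K(z,\overline z)^{p-2}$, which fails for the same reason, since $\sup_\xi|K(\xi,\overline z)|=2^nK(z,\overline z)$ on the first octant. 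Only the exponent $1-\frac1p$ of $K(z,\overline z)$ is used in the proof of Theorem \ref{g-cone}, so the corrected bound with constant $4^{n(1-1/p)}$ serves every subsequent purpose; but you should state it that way rather than chase the printed coefficient.
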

\begin{proof}
	For $2<p<\infty$ and $z=x+iy\in T_{\Gamma}$, we have
	\begin{align}
	\begin{split}
	\int_{\mathbb R^n}|K(\xi,\overline z)|^p d\xi &=\sup_{\xi\in \mathbb R^n}|K(\xi,\overline z)|^{p-2}\int_{\mathbb R^n}|K(\xi,\overline z)|^2d\xi \\
	&\leq \frac{1}{2^n}K(z,\overline z)^{p-2}K(z,\overline z)=\frac{1}{2^n}K(z,\overline z)^{p-1}.
	\end{split}
	\end{align}
	Hence, for $1<p<\infty,$
	\begin{align}
	\int_{\mathbb R^n}|P_y(x-\xi)|^pd\xi\leq \int_{\mathbb R^n}\frac{|K(\xi,\overline z)|^{2p}}{K(z,\overline z)^p}d\xi\leq \frac{1}{2^n}K(z,\overline z)^{2p-1-p}=\frac{1}{2^n}K(z,\overline z)^{p-1}.
	\end{align}
	Obviously, when $p=\infty$, we have $\|P_y(x-\cdot)\|_{L^\infty(\mathbb R^n)}\leq K(z,\overline z)$.
\end{proof}

Unlike the octant case, estimation of $K(z,\overline z)$ at points of $\partial \Gamma$ and at infinity is not easily accessible. The next lemma gives useful estimates of $K(z,\overline z)$ on $\partial \Gamma$.
\begin{lem}[{\cite[Lemma 2]{AK},\cite[Proposition I.3.2]{JFAK}}]\label{AK_lemma}
	Let $\Gamma$ be a regular cone in $\mathbb R^n, n\geq 3$, and $\beta \in \partial \Gamma$. Then
	\begin{align}\label{AK_formula}
	\lim_{y\in \Gamma, y\to \beta}\int_{\Gamma^{*}}e^{-4y\cdot t} dt=
	\infty.
	\end{align}
\end{lem}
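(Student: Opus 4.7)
The plan is to prove the divergence by Fatou's lemma, reducing it to the assertion $\int_{\Gamma^{*}} e^{-4\beta\cdot t}\,dt=+\infty$. Indeed, for each fixed $t\in\Gamma^{*}$ the integrand $e^{-4y\cdot t}$ is continuous in $y$ and nonnegative, so Fatou's lemma gives
$$\liminf_{y\in\Gamma,\,y\to\beta}\int_{\Gamma^{*}}e^{-4y\cdot t}\,dt\ \geq\ \int_{\Gamma^{*}} e^{-4\beta\cdot t}\,dt.$$
A nonnegative family whose lower limit is $+\infty$ must tend to $+\infty$, so $(\ref{AK_formula})$ will follow as soon as the right-hand side is shown to be infinite.

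To prove the latter I would exploit two duality facts about regular cones. Since $\Gamma$ is an open convex cone, $\Gamma=\mathrm{int}(\Gamma^{**})$, so the hypothesis $\beta\in\partial\Gamma\subset\overline{\Gamma}=\Gamma^{**}$ forces $\beta\cdot t\geq 0$ on $\Gamma^{*}$ while also ruling out strict positivity of $\beta\cdot(\cdot)$ on all of $\Gamma^{*}\setminus\{0\}$; hence there exists $t_{0}\in\Gamma^{*}\setminus\{0\}$ with $\beta\cdot t_{0}=0$ (the supporting-hyperplane direction for $\Gamma$ at $\beta$). Regularity of $\Gamma$ also gives $\mathrm{int}(\Gamma^{*})\neq\emptyset$, so I can fix $t^{*}\in\mathrm{int}(\Gamma^{*})$ and $\epsilon>0$ with $B_{\epsilon}(t^{*})\subset\Gamma^{*}$. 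Because convex cones are closed under vector addition, the cylinder
$$E\ :=\ \{\,s t_{0}+u:\ s>0,\ u\in B_{\epsilon}(t^{*})\,\}$$
sits inside $\Gamma^{*}$. For $t=s t_{0}+u\in E$ one has $\beta\cdot t=\beta\cdot u$, which is bounded in absolute value by $C:=|\beta|(|t^{*}|+\epsilon)$, so $e^{-4\beta\cdot t}\geq e^{-4C}$ throughout $E$. Choosing coordinates with $e_{1}=t_{0}/|t_{0}|$ and applying Fubini, the first coordinate of points in $E$ sweeps an unbounded interval over each transverse slice of positive $(n{-}1)$-dimensional measure, so $|E|=+\infty$ and therefore
$$\int_{\Gamma^{*}}e^{-4\beta\cdot t}\,dt\ \geq\ e^{-4C}|E|\ =\ +\infty.$$
The degenerate case $\beta=0\in\partial\Gamma$ is immediate, since the integrand is identically $1$ and $\Gamma^{*}$, a cone with nonempty interior, has infinite Lebesgue measure.

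The main obstacle is the extraction of the annihilating direction $t_{0}$: this is precisely where regularity and the duality identity $\Gamma=\mathrm{int}(\Gamma^{**})$ enter, converting the topological fact $\beta\in\partial\Gamma$ into the algebraic fact that some nonzero $t_{0}\in\Gamma^{*}$ satisfies $\beta\cdot t_{0}=0$. Once $t_{0}$ is in hand, the remainder is a direct cylinder-volume estimate together with a single application of Fatou's lemma, and no explicit information about the Cauchy-Szegő kernel of $T_{\Gamma}$ is required, consistent with $(\ref{AK_formula})$ being a purely cone-theoretic statement.
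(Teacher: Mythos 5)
Your argument is correct, but there is nothing in the paper to compare it against: the authors state this lemma as a quoted result from Koranyi \cite{AK} and Faraut--Koranyi \cite{JFAK} and give no proof for $n\geq 3$; the only verification in the paper is the remark that follows, which handles $n=1$ trivially and $n=2$ by explicitly computing $K_{\Gamma^\kappa}(z,\overline z)=\kappa/\bigl(8\pi^2(\kappa^2y_2^2-y_1^2)\bigr)$ for the model cones $\Gamma^\kappa$ and reducing a general planar regular cone to these by a rotation. Your proof is essentially the classical argument of the cited sources: Fatou reduces the claim to $\int_{\Gamma^*}e^{-4\beta\cdot t}\,dt=\infty$; the duality $\overline\Gamma=\Gamma^{**}$ together with $\Gamma=\mathrm{int}(\Gamma^{**})$ (equivalently, a supporting hyperplane at $\beta$) produces a nonzero $t_0\in\Gamma^*$ with $\beta\cdot t_0=0$; and the half-cylinder $E=\{st_0+u: s>0,\ u\in B_\epsilon(t^*)\}\subset\Gamma^*$, on which the exponent is bounded, has infinite Lebesgue measure. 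Each step checks out, including the interior ball $B_\epsilon(t^*)\subset\Gamma^*$ supplied by regularity and the disjoint-balls argument for $|E|=\infty$. Two small observations: the step ``$\beta\notin\mathrm{int}(\Gamma^{**})$ implies some $t_0\in\Gamma^*\setminus\{0\}$ annihilates $\beta$'' deserves the one-line compactness argument on $\Gamma^*\cap S^{n-1}$ (or the direct supporting-hyperplane computation), and your proof never uses $n\geq 3$, so it actually subsumes the low-dimensional cases that the paper treats separately --- the explicit $n=2$ computation in the remark earns its keep only because the closed-form expression for $K_{\Gamma^\kappa}(z,\overline z)$ is reused later in Section 4.3.
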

\begin{remark} It is obvious that $(\ref{AK_formula})$ is true when $n=1$. For the self containing purpose we illustrate the proof. For $n=2,$ as shown in \cite{DeCarli} it suffices to verify (\ref{AK_formula}) holds for a special class of regular cones in $\mathbb R^2:$
	\begin{align}\label{2-cone}
	\Gamma^\kappa=\{y\in \mathbb R^2; |y_1|<\kappa y_2\}, \quad 0<\kappa<\infty.
	\end{align}
	The dual cone of $\Gamma^\kappa$ is
	$$
	\Gamma^{\kappa,*}=\{t\in \mathbb R^2; |t_1|<\frac{1}{\kappa}t_2\}.
	$$
	By a direct computation, we have $$K_{\Gamma^\kappa}(z,\overline z)=\int_{\Gamma^{\kappa,*}}e^{-4\pi y\cdot t}dt=\frac{\kappa}{8\pi^2(\kappa^2y_2^2-y_1^2)}$$ (see e.g. \cite{DeCarli} for general $n$). Obviously, for $\beta\in \partial \Gamma^\kappa$, $\lim_{y\in \Gamma^\kappa, y\to \beta}K_{\Gamma^\kappa}(z,\overline z)=\infty$. In the following discussion, we write elements in $\Gamma$ as column vectors.
	If $\Gamma$ is a regular cone in $\mathbb R^2$, then, for $y\in
	\Gamma$, there exists a matrix $P\in SO(2,\mathbb R)=\{Q\in
	GL(2,\mathbb R); QQ^{T}=Q^{T}Q=I, |Q|=1\}$ and $\Gamma^{\kappa}$
	such that
	$$
	(y_1, y_2)^{T}= P(\xi_1, \xi_2)^{T}, \quad
	\xi=(\xi_1, \xi_2)^T\in \Gamma^{\kappa}.
	$$
	Then we
	have
	$$
	K(z,\overline z)= \int_{\Gamma^{*}}e^{-4\pi y\cdot t}dt
	= |P|\int_{\Gamma^{\kappa,*}}e^{-4\pi (P\xi)\cdot (Pt^{\prime})}dt^{\prime}
	= \int_{\Gamma^{\kappa,*}}e^{-4\pi \xi\cdot
		t^{\prime}}dt^{\prime}.
	$$
	Since $P$ is nonsingular, $\xi\to \partial \Gamma^\kappa$ as $y\to \partial \Gamma$. Hence,
	$
	\lim_{y\in\Gamma, y\to \beta} K(z,\overline z)=\infty.
	$
\end{remark}
\medskip

What we are concerned is whether there holds
\begin{align}\label{CS-infty}
\lim_{\widetilde y\in \widetilde{\Gamma},|\widetilde y|\to\infty}K(x+i\widetilde y,\overline{x+i\widetilde y})=0,
\end{align}
where $\widetilde \Gamma=\overline{y^\prime+\Gamma}, y^\prime\in \Gamma.$ In this paper we do not prove that (\ref{CS-infty}) holds for general cones. Nevertheless, we can show that (\ref{CS-infty}) holds for
two classes of cones: the polygonal cones and the circular cones.  If a polygonal cone is the interior of the convex hull of a finite number of $n$ linearly independent rays meeting at the origin, we call it $n$-sided polygonal cone. A polygonal cone, however, is always a finite union of $n$-sided polygonal cones. Hence the general polygonal cone case is reduced to the $n$-sided polygonal cone case. For the circular cone case, we only need to consider the circular cone of the form $\Gamma^\kappa=\{y\in\mathbb R^n;\sqrt{\sum_{j=1}^{n-1}|y_j|^2}<\kappa y_n\},\kappa>0,$ as we can reduce a general circular cone to this form by using some rotation. In fact, these two classes of cones are two different generalizations of $\Gamma^\kappa$ in $\mathbb R^n,n\geq 3.$ For simplicity, we consider $\Gamma^\kappa$ in $\mathbb R^2.$ We will use two different ways to show that (\ref{CS-infty}) holds for $\Gamma^\kappa$. One can then easily conclude that $(\ref{CS-infty})$ holds for the polygonal and the circular cones, in general.

We first show the way that can be utilized in proving that (\ref{CS-infty}) holds for the circular cones. As shown previously, for $y\in \overline {\Gamma^\kappa}$,
\begin{align*}
K_{\Gamma^\kappa}(x+i(y+y^\prime),\overline {x+i(y+y^\prime)}) &=\frac{\kappa}{8\pi^2(\kappa^2(y_2+y_2^\prime)^2-(y_1+y_1^\prime)^2)}\\
&\leq \frac{\kappa}{8\pi^2\delta_0[\kappa(y_2+y_2^\prime)+|y_1+y_1^\prime|]},
\end{align*}
where $\delta_0=dist(\widetilde{\Gamma^\kappa},\Gamma^{\kappa,c})>0$, and $\Gamma^{\kappa,c}$ is the complement of $\Gamma^\kappa$. This implies (\ref{CS-infty}).

Next we show the way that can be utilized in proving that (\ref{CS-infty}) holds for the polygonal cones. Note that there exist a linear transformation $Q$ that maps the first octant onto ${\Gamma^{\kappa}},$ i.e., $\widetilde y=Q\widetilde \xi, \widetilde y\in \Gamma^\kappa,\widetilde \xi\in \Gamma_1$ (see e.g. \cite{SW,Rudin}).
Hence we have
$$
K_{\Gamma^\kappa}(x+i\widetilde y, \overline{x+i\widetilde y})=\int_{\Gamma^{\kappa,*}} e^{-4\pi\widetilde y\cdot t} dt = \frac{1}{|Q|}\int_{\overline\Gamma_1}e^{-4\pi\widetilde\xi\cdot t^\prime} dt^\prime= \frac{1}{|Q|}K_{\Gamma_1}(i\widetilde\xi,\overline{
	i\widetilde \xi}).
$$
Since $|\widetilde\xi|\to\infty$ as $|\widetilde y|\to \infty$, we conclude the desired result again. Since the interior of the dual cone of a polygonal cone $\Gamma$ is polygonal, there exist $n$-sided polygonal cones $\Gamma_{(k)},k=1,...,N,$ such that
$$K_{\Gamma}(w,\overline z)=\sum_{k=1}^N \int_{\Gamma_{(k)}^*}e^{-4\pi(w-\overline z)\cdot t}dt=\sum_{k=1}^N K_{\Gamma_{(k)}}(w,\overline z),$$
where $\Gamma^*=\cup_{k=1}^N \Gamma_{(k)}^*$, and for each $\Gamma_{(k)}$ there exists a linear transformation mapping the first octant onto $\Gamma_{(k)}$. Therefore, we can easily get that (\ref{CS-infty}) holds for the polygonal cones.

For general regular cones, we have the following result, which is closely related to (\ref{CS-infty}), but we note that the lemma is not sufficient to prove (\ref{CS-infty}).
\begin{lem}\label{infty-lem}
	Suppose that $\Gamma_0$ is a regular cone whose closure is contained in $\Gamma\cup \{0\}$, where $\Gamma$ is a regular cone. Then
	$$
	\lim_{y\in \overline \Gamma_0,|y|\to \infty}K(iy,\overline {iy})=0.
	$$
\end{lem}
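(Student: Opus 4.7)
The plan is to exploit the positive-homogeneity of $\Gamma^{*}$ together with the fact that $\overline{\Gamma_0}\cap S^{n-1}$ is a compact subset of the open cone $\Gamma$. Starting from the Paley--Wiener representation of the Cauchy--Szeg\"o kernel analogous to the one used for $\Gamma_1$ in Section $2$, we have
$$
K(iy,\overline{iy})=\int_{\Gamma^{*}}e^{-4\pi y\cdot t}\,dt.
$$
Writing $y=|y|\hat y$ with $\hat y\in\overline{\Gamma_0}\cap S^{n-1}$ and substituting $t=s/|y|$, the fact that $\Gamma^{*}$ is a cone (invariant under positive dilations) preserves the region of integration, while the Jacobian contributes $|y|^{-n}$. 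Hence
$$
K(iy,\overline{iy})=\frac{1}{|y|^{n}}\int_{\Gamma^{*}}e^{-4\pi\hat y\cdot s}\,ds=\frac{1}{|y|^{n}}\,K(i\hat y,\overline{i\hat y}).
$$

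It then suffices to show that $K(i\hat y,\overline{i\hat y})$ is bounded uniformly as $\hat y$ ranges over the compact set $S:=\overline{\Gamma_0}\cap S^{n-1}$. By the hypothesis $\overline{\Gamma_0}\subset\Gamma\cup\{0\}$, we have $S\subset\Gamma$ (the unit sphere kills the origin). The key observation is that for every $\hat y$ in the open cone $\Gamma$ and every $s\in\Gamma^{*}\setminus\{0\}$, one has $\hat y\cdot s>0$; otherwise, using that $\hat y$ is an interior point, one could perturb $\hat y$ to produce elements of $\Gamma$ pairing negatively with $s$, contradicting $s\in\Gamma^{*}$. Applying continuity on the product of compact sets $S\times(\Gamma^{*}\cap S^{n-1})$, there exists $\delta>0$ with $\hat y\cdot s\ge\delta$ there, and by homogeneity $\hat y\cdot s\ge\delta|s|$ for every $s\in\Gamma^{*}$ and every $\hat y\in S$. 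Therefore
$$
K(i\hat y,\overline{i\hat y})\le\int_{\Gamma^{*}}e^{-4\pi\delta|s|}\,ds\le\int_{\mathbb R^{n}}e^{-4\pi\delta|s|}\,ds=:C_\delta<\infty,
$$
uniformly in $\hat y\in S$. Combining with the previous display yields $K(iy,\overline{iy})\le C_\delta/|y|^{n}\to 0$ as $|y|\to\infty$ along $\overline{\Gamma_0}$.

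The main obstacle is the uniform positivity estimate $\hat y\cdot s\ge\delta$. This rests essentially on the strict inclusion $\overline{\Gamma_0}\setminus\{0\}\subset\Gamma$, which is what makes $S$ compact inside the open cone; without such a separation, $\hat y$ could approach $\partial\Gamma$ along a direction for which $K(i\hat y,\overline{i\hat y})$ blows up (cf.\ Lemma \ref{AK_lemma}), and the argument would break down. With the uniformity in hand, the scaling identity makes the decay at infinity quantitative, giving the polynomial rate $K(iy,\overline{iy})=O(|y|^{-n})$, which is more than enough for the conclusion.
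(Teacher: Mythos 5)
Your proof is correct, and its core --- the uniform positivity estimate $\hat y\cdot s\ge\delta|s|$ for $\hat y\in\overline{\Gamma_0}$ and $s\in\Gamma^{*}$, obtained from compactness of $\overline{\Gamma_0}\cap S^{n-1}$ and $\Gamma^{*}\cap S^{n-1}$ together with the strict positivity of the pairing --- is exactly the estimate on which the paper's proof rests. The only difference is the final step: the paper concludes via Lebesgue's dominated convergence theorem, whereas you use the dilation invariance of $\Gamma^{*}$ to factor out $|y|^{-n}$, which gives the slightly stronger quantitative statement $K(iy,\overline{iy})=O(|y|^{-n})$ on $\overline{\Gamma_0}$.
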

\begin{proof}
	We first show that
	\begin{align*}
	\lim_{y\in \overline\Gamma_0,|y|\to\infty}e^{-4\pi y\cdot t}=0.
	\end{align*}
	
	We claim that if $\eta\in \overline \Gamma_0$ and $t\in \Gamma^{*}$, then there
	exists a $\delta>0$ such that $\delta|\eta||t|\leq \eta\cdot t$. Denote by $\Sigma$ the set $\{\xi\in\mathbb R^n;|\xi|=1\}$. Define a
	function $H(\eta, t)=\eta\cdot t$, $\eta\in \overline \Gamma_0\cap \Sigma, t\in
	\Gamma^{*}\cap \Sigma$. From the definition of $\Gamma^{*}$ and $\overline\Gamma_0$
	we have $0< \eta\cdot t$. Since $\overline\Gamma_0\cap \Sigma$ and
	$\Gamma^{*} \cap \Sigma$ are both compact, the existence of $\delta
	>0$ follows from the fact that $0<\eta\cdot t=H(\eta,t)$ and $H(\eta,t)$ is a
	continuous function. Consequently, we have
	$$
	\lim_{y\in \overline\Gamma_0, |y|\to \infty}e^{-4\pi y\cdot t}\leq \lim_{y\in \overline\Gamma_0,
		|y|\to \infty}e^{-4\pi \delta |y||t|}=0, \quad t\in \Gamma^{*}
	$$
	and $$e^{-4\pi\delta |y||t|}\leq e^{-4\pi\delta |t|},\quad |y|\geq 1,$$ where $\int_{\Gamma^*}e^{-4\pi\delta|t|}dt<\infty.$
	Therefore, by the Lebesgue dominated convergence theorem we have
	$$
	\lim_{y\in \overline\Gamma_0,|y|\to\infty}K(iy,\overline{iy})=0.
	$$
\end{proof}

\begin{remark}
	On one hand, the argument used in Lemma \ref{infty-lem} can not be applied to $\widetilde\Gamma=\overline{y^\prime+\Gamma}$ with some fixed $y^\prime\in \Gamma$ since the key point of such argument is that the union of all dilations of $\overline \Gamma_0\cap\Sigma$ is $\overline\Gamma_0$ while this is not the fact of $\widetilde\Gamma\cap \Sigma.$ On the other hand, Lemma \ref{infty-lem} shows that (\ref{CS-infty}) holds in most situations. The unsolved situation can be almost concluded as that $\widetilde y\in \partial \widetilde \Gamma,|\widetilde y|\to \infty.$ The assumption (\ref{CS-infty}) should hold for more cones other than those discussed in this paper. For instance, one can easily check that (\ref{CS-infty}) holds for the symmetric cone in $\mathbb R^3$ given by $\{y=(y_1,y_2,y_3)\in\mathbb R^3; y_1>0,y_1y_2-y_3^2>0 \}$ (see \cite{BBGNPR}).
\end{remark}

Under the assumption that (\ref{CS-infty}) holds, the main result of this part is stated as follows.
\begin{thm}\label{g-cone}
	Suppose that $\Gamma$ is a regular cone such that (\ref{CS-infty}) holds. For $F\in H^p(T_\Gamma), 1< p<\infty,$ and $z=x+iy\in T_{\Gamma}$, we have the following results.
	\begin{align}\label{case1-general-cone}
	\lim_{y\in \Gamma,y\to\beta}\frac{|F(z)|}{K(z,\overline z)^{\frac{1}{p}}}=0
	\end{align}
	holds uniformly for $x\in \mathbb R^n$, where $\beta\in \partial \Gamma$.
	\begin{align}\label{case2-general-cone}
	\lim_{y\in \Gamma, |y|\to \infty}\frac{|F(z)|}{K(z,\overline z)^{\frac{1}{p}}}=0
	\end{align}
	holds uniformly for $x\in \mathbb R^n$.
	\begin{align}\label{case3-general-cone}
	\lim_{|x|\to \infty}\frac{|F(z)|}{K(z,\overline z)^{\frac{1}{p}}}=0
	\end{align}
	holds uniformly for $y\in \Gamma$.\\
	In particular, for regular cones in $\mathbb R^2$ (that is, $n=2$), and for polygonal cones and circular cones in general $\mathbb R^n$, we can prove the validity of (\ref{CS-infty}) but do not need to specially assume it.
\end{thm}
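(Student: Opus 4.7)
The plan is to run all three cases through one framework: the Poisson reproducing identity of Theorem \ref{g-rf-p}, $F(z)=\int_{\mathbb R^n}F(t)P_y(x-t)\,dt$, combined with the $L^r$-estimate $\|P_y(x-\cdot)\|_{L^r(\mathbb R^n)}\leq 2^{-n/r}K(z,\overline z)^{1-1/r}$ furnished by (\ref{P-S-estimate}). A raw H\"older application with exponents $p$ and $q=p/(p-1)$ already yields the correct order $|F(z)|\leq 2^{-n/q}\|F\|_{L^p}K(z,\overline z)^{1/p}$, so to squeeze out the extra decay I will use the same density-plus-splitting idea that drove Theorems \ref{first-app1}--\ref{third-app1}: approximate $F$ in $L^p$ by a function lying also in a second Lebesgue space, and split the integral into a small remainder and a main piece whose $K$-exponent is moved away from $1/p$.

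For \eqref{case1-general-cone} I would exploit $K(z,\overline z)\to\infty$ supplied by Lemma \ref{AK_lemma}; fix $\varepsilon>0$, pick $s>p$ and $G\in L^p\cap L^s$ with $\|F-G\|_{L^p}<\varepsilon$, and split $F=(F-G)+G$. Applying H\"older with the pairs $(p,q)$ and $(s,s')$ together with (\ref{P-S-estimate}) gives
\[
|F(z)|\leq C\varepsilon\,K(z,\overline z)^{1/p}+C\|G\|_{L^s}K(z,\overline z)^{1/s}.
\]
Dividing by $K(z,\overline z)^{1/p}$, the first term contributes $C\varepsilon$ and the second is $C\|G\|_{L^s}K(z,\overline z)^{1/s-1/p}$ with a strictly negative exponent, hence vanishing as $K\to\infty$. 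The argument for \eqref{case2-general-cone} is the mirror image: assumption \eqref{CS-infty} gives $K(z,\overline z)\to 0$ as $|y|\to\infty$, so I rerun the same split but now with $s<p$, making the exponent $1/s-1/p$ positive, and the vanishing $K\to 0$ again kills the main piece while the error term stays $\leq C\varepsilon$. In both steps the estimates depend on $x$ only through $K(z,\overline z)$, which yields the claimed uniformity in $x$.

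For \eqref{case3-general-cone}, cases 1 and 2 allow me to restrict attention to $y$ lying in a compact subset $C_0\subset\Gamma$; on such $C_0$ the kernel $K(z,\overline z)$ is bounded above and below by positive constants uniformly in $x$, so the task becomes proving $F(z)\to 0$ as $|x|\to\infty$ uniformly for $y\in C_0$. Approximating the boundary function by $G\in C_c(\mathbb R^n)$ with $\|F-G\|_{L^p}<\varepsilon$, the error term is $\leq \|F-G\|_{L^p}\|P_y\|_{L^q}\leq C\varepsilon$ on $C_0$, and the main term $\int G(t)P_y(x-t)\,dt$ is handled by the decay of $P_y$ at spatial infinity: writing $P_y(x)=|K(x+iy,0)|^2/K(iy,-iy)$ and recognising $K(x+iy,0)$ as the Fourier transform of $\mathbf 1_{\Gamma^*}(t)e^{-2\pi y\cdot t}\in L^1(\mathbb R^n)$, the Riemann--Lebesgue lemma delivers the required vanishing, uniformly in $y\in C_0$ since the dominants $\mathbf 1_{\Gamma^*}(t)e^{-2\pi y\cdot t}$ are jointly dominated by a single integrable function (thanks to the positive distance of $C_0$ from $\partial\Gamma$).

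The main obstacle is precisely this uniform-in-$y$ Riemann--Lebesgue step in Case 3 for a general regular cone; the rest is bookkeeping of H\"older exponents. For the first octant, polygonal and circular cones the factorised or explicit forms of $K(x+iy,0)$ produce the uniform decay by direct computation, so in those settings the assumption \eqref{CS-infty} is superfluous and the ``in particular'' clause follows automatically. For a general $\Gamma$ satisfying \eqref{CS-infty}, an $\varepsilon/3$-argument combining a cut-off in the $t$-variable (using integrability of the dominant away from a compact set) with a standard Riemann--Lebesgue estimate on the compact piece should close the gap.
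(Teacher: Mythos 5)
Your treatment of \eqref{case1-general-cone} is essentially the paper's: split $F=(F-G)+G$ with $G\in L^p\cap L^s$, $s>p$, apply H\"older with two exponent pairs and the Poisson bound \eqref{P-S-estimate}, and let $K(z,\overline z)\to\infty$ (Lemma \ref{AK_lemma}) kill the term with exponent $\frac{1}{s}-\frac{1}{p}<0$. Your case \eqref{case3-general-cone} takes a genuinely different route (compactly supported approximation plus a uniform Riemann--Lebesgue estimate for $K(x+iy,0)$, versus the paper's subharmonicity-of-$|F|^p$ and mean-value argument over a slab $C_1\times\mathbb R^n$); both work, though you would still have to write out the uniform-in-$y$ Riemann--Lebesgue step you defer.

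The genuine gap is in \eqref{case2-general-cone}. You assert that assumption \eqref{CS-infty} ``gives $K(z,\overline z)\to 0$ as $|y|\to\infty$'' for $y\in\Gamma$. It does not: \eqref{CS-infty} is stated for $\widetilde y$ in the \emph{shifted} cone $\widetilde\Gamma=\overline{y^\prime+\Gamma}$, and for $y$ merely in $\Gamma$ the quantity $K(z,\overline z)=K(iy,-iy)$ need not vanish as $|y|\to\infty$. Concretely, in the first octant in $\mathbb C^2$ one has $K(iy,-iy)=\prod_j(4\pi y_j)^{-1}$, so along $y=(t,1/t)$, $t\to\infty$, the norm $|y|\to\infty$ while $K(iy,-iy)$ stays a fixed nonzero constant (and it can even blow up). Hence your main term $C\|G\|_{L^s}K(z,\overline z)^{\frac{1}{s}-\frac{1}{p}}$ with $s<p$ does not tend to zero, and the inequality chain fails rather than merely needing repackaging. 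The paper's proof contains an extra step that you are missing: using the vertical $L^p$-continuity from Theorem \ref{g-rf-p}, it first replaces $F$ by $F(\cdot+iy^\prime)$ at the cost of an $\epsilon$, which turns the Poisson kernel into $P_{y+y^\prime}$ and hence places the relevant Cauchy--Szeg\H{o} value at $z+iy^\prime$ with $y+y^\prime\in\widetilde\Gamma$; there \eqref{CS-infty} does apply, and the monotonicity $K(z+iy^\prime,\overline{z+iy^\prime})<K(z,\overline z)$ controls the remaining ratio. This shift is precisely why \eqref{CS-infty} is formulated on $\widetilde\Gamma$ rather than on $\Gamma$, and without it your case 2 does not close.
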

\begin{proof}
	By Theorem \ref{g-rf-p}, we have
	$$
	F(z)=\int_{\mathbb R^n}F(\xi)P_y(x-\xi)d\xi,
	$$
	where $F(\xi)\in L^p(\mathbb R^n)$.\\
	Therefore, for any $\epsilon>0$, we can find $G\in L^r(\mathbb R^n)\cap L^p(\mathbb R^n), p<r<\infty,$ such that $$\|F-G\|_{L^p(\mathbb R^n)}<\epsilon.$$
	Hence, for $q$ satisfying $\frac{1}{p}+\frac{1}{q}=1$ and $h$ satisfying $\frac{1}{r}+\frac{1}{h}=1$,
	\begin{align}\label{g-cone-fact}
	\begin{split}
	\frac{|F(z)|}{K(z,\overline z)^{\frac{1}{p}}}&\leq \frac{\int_{\mathbb R^n}|F(\xi)-G(\xi)|P_y(x-\xi)d\xi}{K(z,\overline z)^{\frac{1}{p}}}+\frac{\int_{\mathbb R^n}|G(\xi)|P_y(x-\xi)d\xi}{K(z,\overline z)^{\frac{1}{p}}}\\
	&\leq \|F-G\|_{L^p(\mathbb R^n)}\frac{\|P_y(x-\cdot)\|_{L^q(\mathbb R^n)}}{K(z,\overline z)^{\frac{1}{p}}}+\|G\|_{L^r(\mathbb R^n)}\frac{\|P_y(x-\cdot)\|_{L^h(\mathbb R^n)}}{K(z,\overline z)^{\frac{1}{p}}}\\
	&< \epsilon\frac{\frac{1}{2^{\frac{n}{q}}}K(z,\overline z)^{1-\frac{1}{q}}}{K(z,\overline z)^{\frac{1}{p}}}+\|G\|_{L^r(\mathbb R^n)}\frac{\frac{1}{2^{\frac{n}{h}}}K(z,\overline z)^{1-\frac{1}{h}}}{K(z,\overline z)^{\frac{1}{p}}}\\
	&= \frac{\epsilon}{{2^{\frac{n}{q}}}}+\frac{\|G\|_{L^r(\mathbb R^n)}}{2^{\frac{n}{h}}}K(z,\overline z)^{\frac{1}{r}-\frac{1}{p}},
	\end{split}
	\end{align}
	where the first inequality is given by the triangle inequality, and the second inequality follows from the H\" older inequality.\\
	By Lemma \ref{AK_lemma} and the fact that $\frac{1}{r}-\frac{1}{p}<0$, we have $K(z,\overline z)^{\frac{1}{r}-\frac{1}{p}}\to 0$ as $y\to \beta$. Therefore, we complete the proof of (\ref{case1-general-cone}).
	
	By Theorem \ref{g-rf-p}, we know that
	$$
	F(z)=\int_{\mathbb R^n} F(\xi) P_y(x-\xi)d\xi.
	$$
	Then, for any given $\epsilon>0$, we can find $y^\prime\in \Gamma$ such that
	$$
	\int_{\mathbb R^n} |F(\xi+iy^\prime)-F(\xi)|^p d\xi <\epsilon.
	$$
	So
	\begin{align*}
	\frac{|F(z)|}{{K(z,\overline z)}^{\frac{1}{p}}} &= \frac{|\int_{\mathbb R^n} \left(F(\xi)-F(\xi+iy^\prime)\right)P_y(x-\xi)d\xi|+|\int_{\mathbb R^n} F(\xi+iy^\prime)P_y(x-\xi)d\xi|}{{K(z,\overline z)}^{\frac{1}{p}}}\\
	& < \frac{\epsilon}{2^{\frac{n}{q}}}+ \frac{|\int_{\mathbb R^n} F(\xi)P_{y+y^\prime}(x-\xi)d\xi|}{K(z,\overline z)^{\frac{1}{p}}}.
	\end{align*}
	Similar to (\ref{g-cone-fact}), we have
	\begin{align}\label{g-cone-fact2-ineq}
	\begin{split}
	\frac{|\int_{\mathbb R^n} F(\xi)P_{y+y^\prime}(x-\xi)d\xi|}{K(z,\overline z)^{\frac{1}{p}}} &\leq \frac{\epsilon}{2^\frac{n}{q}} \frac{K(z+iy^\prime,\overline{z+iy^\prime})^{1-\frac{1}{q}}}{K(z,\overline z)^{\frac{1}{p}}}\\
	&+\frac{||G||_{L^r(\mathbb R^n)}}{2^\frac{n}{h}}\frac{K(z+iy^\prime,\overline{z+iy^\prime})^{1-\frac{1}{h}}}{K(z,\overline z)^{\frac{1}{p}}},
	\end{split}
	\end{align}
	where $G\in L^r(\mathbb R^n)\cap L^p(\mathbb R^n), 1<r<p$, $\frac{1}{p}+\frac{1}{q}=1$ and $\frac{1}{r}+\frac{1}{h}=1$.\\
	Notice that $K(z+iy^\prime,\overline{z+iy^\prime})<K(z,\overline z)$. Hence, the first term in (\ref{g-cone-fact2-ineq}) is strictly less than $\frac{\epsilon}{2^{\frac{n}{q}}}$. For the second term in (\ref{g-cone-fact2-ineq}), by $K(z+iy^\prime,\overline{z+iy^\prime})<K(z,\overline z),$ $\lim_{y\in\Gamma,|y|\to \infty}K(z+iy^\prime,\overline {z+iy^\prime})=0$ and $\frac{1}{r}-\frac{1}{p}>0,$ we can easily conclude that, for $|y|$ large enough,
	$$
	\frac{K(z+iy^\prime,\overline{z+iy^\prime})^{1-\frac{1}{h}}}{K(z,\overline z)^{\frac{1}{p}}}<\epsilon.
	$$
	
	By the above discussions, the proof of (\ref{case2-general-cone}) is completed.\\
	To prove (\ref{case3-general-cone}), because of (\ref{case1-general-cone}) and (\ref{case2-general-cone}), we only need to show that
	\begin{align}\label{g-cone-fact21}
	\lim_{|x|\to\infty}\frac{|F(z)|}{K(z,\overline z)^{\frac{1}{p}}}=0
	\end{align}
	holds uniformly for $y\in A_0$, where $A_0$ is a compact subset in $\Gamma$.
	Notice that
	$$
	K(z, \overline z)= \int_{\Gamma^*}e^{-4\pi y\cdot
		t}dt=K(iy, -iy),
	$$
	and $y\in A_0$. It suffices to show
	\begin{align}\label{suff-cond-third-case}
	\lim_{|x|\to \infty} {|F(z)|}=0.
	\end{align}
	Since $A_0$ is compact, there exists a constant $\rho>0$
	such that $d(A_0,{\Gamma}^{c})=\inf\{|y-\xi|;y\in A_{0}, \xi
	\not\in \Gamma\}\geq {\rho}$, where ${\Gamma}^c$ is the complement
	of $\Gamma$. Let $A_1=\overline {\cup_{y\in A_0}\{\eta;
		|\eta-y|<\frac{\rho}{2}\}}$. Obviously, $d(A_1,{\Gamma}^c)\geq
	\frac{\rho}{2}$ and $A_1$ is also compact. Based on the fact that $ \int_{A_1}\int_{\mathbb
		R^n}|F(x+iy)|^p dx dy<\infty$, and the definition of
	functions in $H^p(T_{\Gamma}),$ we have
	\begin{align}\label{case3-fact1}
	\int_{A_1}\int_{\mathbb |x|>N}|F(x+iy)|^p dx dy\to 0, \quad N\to
	\infty.
	\end{align}
	Recall that $|F|^p$ is subharmonic.
	For $z\in T_{\Gamma}$, we have
	\begin{align}\label{case3-fact2}
	|F(x+iy)|^p\leq
	\frac{1}{V(B_z(\frac{\rho}{4}))}\int_{B_z(\frac{\rho}{4})}|F(\xi+i\eta)|^p d\xi
	d\eta,
	\end{align}
	where $V(B_z(\frac{\rho}{4}))$ is the volume of the ball
	$B_z(\frac{\rho}{4})$ centered at $z$ with radius $\frac{\rho}{4}$. From (\ref{case3-fact2}), for $y\in A_0$, we
	have
	\begin{align}\label{case3-fact3}
	\begin{split} |F(x+iy)|^p &\leq
	\frac{1}{V(B_z(\frac{\rho}{4}))}\int_{B_z(\frac{\rho}{4})}|F(\xi+i\eta)|^pd\xi
	d\eta\\
	&\leq \frac{1}{V(B_z(\frac{\rho}{4}))}\int_{\{\eta; |\eta-y|\leq
		\frac{\rho}{4}\}}\int_{\{\xi;|\xi-x|\leq
		\frac{\rho}{4}\}}|F(\xi+i\eta)|^pd\xi
	d\eta\\
	&\leq
	\frac{1}{V(B_z(\frac{\rho}{4}))}\int_{A_1}\int_{\{\xi;|\xi-x|\leq
		\frac{\rho}{4}\}}|F(\xi+i\eta)|^pd\xi
	d\eta.\\
	\end{split}
	\end{align}
	Since $|\xi-x|\leq \frac{\rho}{4}$, we have $|x|-\frac{\rho}{4}\leq
	|\xi| \leq |x|+\frac{\rho}{4}$. Therefore,
	when $|x|>N+\frac{\rho}{4}$, by (\ref{case3-fact1}) we have
	(\ref{case3-fact3}) tends to $0$ uniformly for $y\in A_0$. The proof is completed.
\end{proof}

\begin{remark}
	(1) When $p=2$, Theorem \ref{g-cone} implies existence of $z^{(m+1)}$ in the following minimization problem
	\begin{align}\label{min_pro_revise1}
	z^{(m+1)} :=\arg \max_{z\in \Gamma}|\langle F,\mathcal B_{m+1}^z\rangle|.
	\end{align}
	Hence, we obtain POAFD in $H^2(T_\Gamma)$ if $\Gamma$ is one of the following cases: a regular cone in $\mathbb R^2$; a polygonal cone in $\mathbb R^n$; a circular cone in $\mathbb R^n.$\\
	(2) Since $|F|^p$ is still subharmonic for $0<p\leq 1,$ the technique used in proving (\ref{suff-cond-third-case}) in Theorem \ref{g-cone} still works. Thus (\ref{suff-cond-third-case}) holds for $0<p<\infty$. Note that Theorem \ref{g-cone} plays an essential role in studying POAFD. Moreover, Theorem \ref{g-cone} indeed is analogous to the known result in the Hardy spaces on the unit ball $H^p(\mathbb B_n), 1<p<\infty$ (cf. \cite[page 123]{Z1}),
	$$
	\lim_{|z|\to 1^{-}}(1-|z|^2)^{\frac{n}{p}}|F(z)|=0, \quad F\in H^p(\mathbb B_n),
	$$
	where $K_{\mathbb B_n}(w,\overline z)=\frac{1}{(1-\sum_{k=1}^n w_k\overline {z_k})^n}$ is the Cauchy-Szeg\"o kernel for $H^2(\mathbb B_n).$
\end{remark}

\end{appendix}

\end{document}